\definecolor{darkred}{RGB}{139,0,0}
\definecolor{darkgreen}{RGB}{0,100,0}
\definecolor{darkmagenta}{RGB}{139,0,139}
\definecolor{darkpurple}{RGB}{110,0,180}
\definecolor{darkblue}{RGB}{40,0,200}
\definecolor{darkorange}{RGB}{255,140,0}
\newtheorem{lemma}{Lemma}
\newtheorem{theorem}{Theorem}
\newtheorem{corollary}{Corollary}
\newtheorem{remark}{Remark}
\newtheorem{iremark}[remark]{Important Remark}
\newtheorem{definition}{Definition}
\newcommand{\F}{\mathbb{F}}
\newcommand{\NN}{\mathbb{N}}
\newcommand{\ZZ}{\mathbb{Z}}
\newcommand{\cT}{\mathcal{T}}
\newcommand{\bsc}{{\boldsymbol{c}}}
\newcommand{\bsk}{{\boldsymbol{k}}}
\newcommand{\bsh}{{\boldsymbol{h}}}
\newcommand{\bsw}{{\boldsymbol{w}}}
\newcommand{\bsx}{{\boldsymbol{x}}}
\newcommand{\bsy}{{\boldsymbol{y}}}
\newcommand{\bsz}{{\boldsymbol{z}}}
\newcommand{\bszero}{{\boldsymbol{0}}}
\newcommand{\bsone}{{\boldsymbol{1}}}
\newcommand{\bsalpha}{{\boldsymbol{\alpha}}}
\newcommand{\bsgamma}{{\boldsymbol{\gamma}}}
\newcommand{\rd}{\,{\rm d}}
\newcommand{\uu}{\mathfrak{u}}
\newcommand{\cP}{\mathcal{P}}
\newcommand{\cQ}{\mathcal{Q}}
\newcommand{\icomp}{\mathtt{i}}
\title{Digital inversive vectors can achieve strong polynomial tractability for the weighted star discrepancy and for multivariate integration}
\author{Josef Dick, 
Domingo Gomez-Perez, \\
Friedrich Pillichshammer, 
Arne Winterhof\thanks{The research of J.\ Dick was supported under the Australian Research Councils Discovery Projects funding scheme (project number DP150101770). 
The research of D.\ Gomez-Perez is supported by the Ministerio de Economia y Competitividad research project MTM2014-55421-P.
The last two authors are supported by the
Austrian Science Fund (FWF): Projects F5509-N26 (Pillichshammer) and F5511-N26 (Winterhof), respectively, which are part of the
Special Research Program ``Quasi-Monte Carlo Methods: Theory and
Applications''.} 
}
\date{}
\begin{document}

\maketitle

\begin{center} 
In memory of Joseph Frederick Traub (1932--2015)\\ and \\ Oscar Moreno de Ayala (1946--2015)
\end{center}

\begin{abstract}
We study high-dimensional numerical integration in the worst-case
setting. The subject of tractability is concerned with the dependence
of the worst-case integration error on the dimension. Roughly
speaking, an integration problem is tractable if the worst-case error
does not explode exponentially with the dimension. Many classical
problems are known to be intractable. However, sometimes tractability can
be shown. Often such proofs are based on randomly selected integration
nodes. Of course, in applications true random numbers are not
available and hence one mimics them with pseudorandom number generators. This motivates us to propose the use of
pseudorandom vectors as underlying integration nodes in order to achieve tractability. In particular, we consider digital inverse vectors and present two examples of problems, the weighted star discrepancy and integration of H\"older continuous, absolute convergent Fourier- and cosine series, where the proposed method is successful.    
\end{abstract}

\centerline{\begin{minipage}[hc]{150mm}{
{\em Keywords:} Weighted star discrepancy, pseudo-random numbers, tractability, quasi-Monte Carlo.\\
{\em MSC 2000:} 11K38, 11K45, 11T23, 65C05, 65C10.}
\end{minipage}}

\section{Introduction}

We study numerical integration of multivariate functions $f$ defined
on the $s$-dimensional unit cube by means of quasi-Monte Carlo (QMC)
rules, that is,
$$I_s(f):=\int_{[0,1]^s} f(\bsx) \rd \bsx \approx
\frac{1}{N}\sum_{n=0}^{N-1} f(\bsx_n)=:Q_N(f),$$ 
where we assume that $\bsx_0,\ldots,\bsx_{N-1}$ are well chosen
integration nodes from the unit cube and where $Q_N$ is the QMC-rule based on these nodes. General references for QMC integration are \cite{DKS_acta,DP10,LP14,ni92}.

Usually one studies integrands
from a given Banach space $(\mathcal{F},\|\cdot \|_{\mathcal{F}})$ of
functions. 
As quality criterion we consider the {\it worst-case
error} $$e(Q_N,\mathcal{F})=\sup_{f \in \mathcal{F}\atop
  \|f\|_{\mathcal{F}} \le 1} |I_s(f)-Q_N(f)|.$$ 

For many function classes the problem of QMC integration is
well-studied and one can achieve optimal asymptotic convergence rates
for the worst-case error which are often of the form
$O(N^{-1+\varepsilon})$ for $\varepsilon>0$, with some implicit
dependence on the dimension $s$. Although this is 
the best possible convergence rate in $N$, the dependence on the dimension can be crucial if $s$ is large. This question is the subject of tractability. Tractability means
that we control the dependence of the worst-case error on the
dimension. 

\paragraph{Tractability.} For the numerical integration problem in
$\mathcal{F}$ the {\it $N$th minimal worst-case error} is defined
as $$e(N,s)=\inf_{A_{N,s}} \sup_{f \in \mathcal{F}\atop
  \|f\|_{\mathcal{F}} \le 1} |I_s(f)-A_{N,s}(f)|,$$ where the infimum
is extended over all integration rules (not necessarily QMC rules)
which are based on $N$ function evaluations $f(\bsx_n)$,
$n=0,1,\ldots,N-1$. For $\varepsilon \in (0,1)$ the {\it information
complexity} $N(\varepsilon,s)$
is then defined as the minimal number of function
evaluations which are required in order to achieve a worst-case error
of at most $\varepsilon$. In other words\footnote{Here we speak about the absolute error
criterion. Sometimes one uses the so-called initial error
$e(0,s)=\|I_s\|$ as a reference value and asks for the minimal $N$ in
order to reduce this minimal error by a factor of $\varepsilon$. In
this case one speaks about the normalized error criterion.},  
$$N(\varepsilon,s)=\min\{N \in \mathbb{N}\ :
\ e(N,s) \le \varepsilon\}.$$ 

We say that we have:
\begin{itemize}
\item The {\it curse of dimensionality} if there exist positive
  numbers $C$, $\tau$ and $\varepsilon_0$ such that $$N(\varepsilon,s)
  \ge C (1+\tau)^s\ \ \mbox{ for all } \varepsilon \le \varepsilon_0 \
  \mbox{ and infinitely many } s \in \mathbb{N}.$$ 
\item {\it Polynomial tractability} if there exist non-negative
  numbers $C,\tau_1,\tau_2$ such that $$N(\varepsilon,s) \le C
  s^{\tau_1} \varepsilon^{-\tau_2}\ \ \mbox{ for all } s \in \NN,\
  \varepsilon \in (0,1).$$ 
\item {\it Strong polynomial tractability} if there exist non-negative
  numbers $C$ and $\tau$ such that $$N(\varepsilon,s) \le C
  \varepsilon^{-\tau}\ \ \mbox{ for all } s \in \NN,\  \varepsilon \in
  (0,1).$$ The {\it exponent $\tau^\ast$ of strong polynomial
    tractability} is defined as the infimum of $\tau$ for which strong
  polynomial tractability holds. 
\end{itemize}

Besides polynomial and strong polynomial tractability there are many
other notions of tractability such as {\it weak tractability},
which means that  
$$\lim_{\varepsilon^{-1}+s \rightarrow \infty}
\frac{\log N(\varepsilon,s)}{\varepsilon^{-1}+s}=0.$$ 
Problems that are
not weakly tractable (that is, the information complexity depends
exponentially on $\varepsilon^{-1}$ or $s$) are said to be {\it
  intractable}. The current state of the art in tractability theory is
very well summarized in the three volumes of Novak and Wo\'{z}niakowski
\cite{NW08,NW10,NW12}.

It is known that many multivariate problems defined over standard
spaces of functions suffer from the curse of dimensionality, 
as for example, integration of Lipschitz functions, monotone functions, 
convex functions (see \cite{HNW11}), or smooth functions (see \cite{HNUWI,HNUWII}). The reason for this
disadvantageous behavior may be found in the fact that for standard
spaces all variables and groups of variables are equally important. As
a way out, Sloan and Wo\'{z}niakowski~\cite{SW} suggested to consider
weighted spaces, in which the importance of successive variables and
groups of variables is monitored by corresponding weights, to vanquish
the curse of dimensionality and to obtain polynomial or even strong
polynomial tractability, depending on the decay of the weights.  

It is possible to construct QMC rules for some cases which achieve the one or
other notion of tractability, for example (polynomial) lattice rules for
integration in weighted Korobov spaces or Sobolev spaces. In such
cases the point sets heavily depend on the chosen weights and can 
generally not be used for other weights. A further disadvantage is,
that there is in general no explicit construction for point sets which
can achieve tractability error bounds and thus one relies on computer
search algorithms (for example, the fast component-by-component
constructions, see \cite{DKS_acta,LP14} and the references therein).

On the other hand, for those instances of problems which are tractable
this property is often proved with randomly selected point sets. A
particular example is the ``inverse of star discrepancy problem'' for
which Heinrich, Novak, Wasilkowski, and Wo\'{z}niakowski~\cite{HNWW}
showed with the help of random point sets that the star discrepancy is
polynomially tractable (see also \cite{Aist}). In \cite{AH14} it is even shown that with very high probability (say $99\%$), a randomly selected point set satisfies the aforementioned bounds. However, if we generate a random point set on a computer using a pseudorandom number generator, this result does not apply since the pseudo-random numbers are deterministically constructed. Thus a fundamental question is whether known pseudorandom generators can be used to generate point sets which satisfy discrepancy bounds which imply polynomial tractability.

Thus in this paper we consider point sets
generated by a certain pseudorandom number generator as candidates for
point sets which achieve tractability for certain 
problems. First
attempts in this direction 
have been
made in \cite{d14,dipi14} where
so-called $p$-sets are used (see also the survey \cite{dipiSETA}).

In the present paper we consider another choice of pseudorandom numbers
obtained from explicit inversive pseudorandom number generators. We
show that such point sets can be used to achieve tractability for two
problems, namely the weighted star discrepancy problem
(Section~\ref{sec_wdisc}) and integration of functions from a subclass
of the Wiener algebra which has some additional smoothness properties
(Section~\ref{sec_integ}).   

In the subsequent section we introduce the proposed pseudorandom vectors and prove an estimate on an exponential sum from which we can derive discrepancy- and worst-case error bounds.

\section{Explicit inversive vectors}

Let $\F_q$ be the finite field of order $q=p^k$ with a prime $p$ 
and an integer
$k\geq 1$. Further let $\{\beta_1,\ldots,\beta_k\}$ be an ordered basis of 
$\F_q$ over $\F_p$. 

From a finite vector set in $\F_q^s$
$$\{\bsz_n=(z_{n,1},z_{n,2},\ldots,z_{n,s})\in \F_q^s: n=0,1,\ldots,N-1\},$$ 
we can derive a point set in the $s$-dimensional unit interval. 
More precisely, if 
\begin{equation}
\label{z}
  \bsz_n=\bsc_n^{(1)}\beta_1+\bsc_n^{(2)}\beta_2+\cdots
	+\bsc_n^{(k)}\beta_k
\end{equation}
with all
$\bsc_n^{(j)}=(c_{n,1}^{(j)},c_{n,2}^{(j)},\ldots,c_{n,s}^{(j)})\in
\F_p^s$, then we define an $s$-dimensional {\em digital point set} 
\begin{equation}\label{x}
{\cal P}_s=\left\{\bsx_n=\sum_{j=1}^k \bsc_n^{(j)} p^{-j}\in [0,1)^s :
  n=0,1,\ldots,N-1\right\}. 
\end{equation}

The following point set was essentially introduced in \cite{niwi00}.

\begin{definition}
[Set of explicit inversive points of size $q$]
\label{def1}\rm
Put
$$\overline{z}=\left\{\begin{array}{ll} z^{-1} & \mbox{ if } z\in \F_q^*,\\ 0 & \mbox{ if } z=0.\end{array}\right.$$
For $1\le s\le q$ we choose a subset $S\subseteq\F_q$ of cardinality $s$. 
We consider the vector set 
\begin{equation}\label{Defz}
{\cal S}=\{\bsz_0,\ldots,\bsz_{q-1}\}=\{(\overline{u+v})_{v\in S} : u\in \F_q\}\subset \F_q^s
\end{equation}
of size $q$ and derive ${\cal P}_s=\{\bsx_0,\ldots,\bsx_{q-1}\}\in [0,1)^s$ from ${\cal S}$ by $\eqref{z}$ and $\eqref{x}$. Note that here $N=|\cP_s|=q$.
\end{definition}

Our second point set was essentially introduced in~\cite{meidl2004linear} and is 
defined as follows. 

\begin{definition}[Set of explicit inversive points of period $T$]\label{def2}\rm
Let $0\ne\theta\in\F_q$ be an element of 
multiplicative order $T$ (hence $T| (q-1)$) and $S\subseteq\F_q$ be of cardinality $1\le s\le q$. Then we define
\begin{equation}
\label{eq:definition_winterhof06}
{\cal S}=\{\bsz_0,\ldots,\bsz_{T-1}\}=\{(\overline{\theta^n+v})_{v\in
  S} : n =0,\ldots, T-1\}\subset \F_q^s
\end{equation}
of size $T$ and derive ${\cal P}_s=\{\bsx_0,\ldots,\bsx_{T-1}\}\in
[0,1)^s$ from ${\cal S}$ by $\eqref{z}$ and $\eqref{x}$. 
We remark that in this case  $N=|\cP_s|=T$ and $T$ divides $q-1$.
\end{definition}
We introduce some notation.
\begin{itemize}
\item For $s \in \NN$ put $[s]:=\{1,2,\ldots,s\}$. 
\item For a vector $\bsx=(x_1,x_2,\ldots,x_s)$ and for a nonempty $\mathfrak{u}
  \subseteq [s]$, let $\bsx_{\uu}$ be the projection of $\bsx$ onto
  the components whose index belongs to $\uu$, that is, for
  $\uu=\{u_1,u_2,\ldots,u_w\}$ with $u_1<u_2<\ldots<u_w$ we have
  $\bsx_{\uu}=(x_{u_1},x_{u_2},\ldots,x_{u_w})$. 
\item Let $\psi$ denote the canonical additive character of $\F_q$.  
\item For vectors $\bsx,\bsy \in \F_q^s$ let $\bsx \cdot \bsy \in \F_q$ denote their standard inner product.
\end{itemize}
Now, we are ready to state the first character sum bound.
\begin{lemma}\label{le0}
Let ${\cal S}=\{\bsz_0,\ldots,\bsz_{q-1}\}$ be given by \eqref{Defz} and let
$\emptyset \not=\uu \subseteq [s]$. Then we have $$\max_{\bsw\in
  \F_q^{|\mathfrak{u}|}\setminus\{\bf 0\}} \left|\sum_{n=0}^{q-1}
  \psi(\bsw\cdot \bsz_{n,\mathfrak{u}}) \right|\le
(2|\mathfrak{u}|-2)q^{1/2}+|\mathfrak{u}|+1.$$  
\end{lemma}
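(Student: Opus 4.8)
The plan is to interpret the inner product as an additive character sum of a one-variable rational function over $\F_q$ and to apply Weil's bound. Since the components of $\bsz_n$ in \eqref{Defz} are indexed by $v\in S$, with $\bsz_n=(\overline{u+v})_{v\in S}$ corresponding to $u\in\F_q$, the projection $\bsz_{n,\uu}$ selects a subset $S_\uu\subseteq S$ with $|S_\uu|=|\uu|$; writing $\bsw=(w_v)_{v\in S_\uu}\in\F_q^{|\uu|}\setminus\{\bszero\}$ and noting that $n\mapsto u$ runs bijectively over $\F_q$, the sum in question becomes $\sum_{u\in\F_q}\psi\bigl(\sum_{v\in S_\uu}w_v\,\overline{u+v}\bigr)$. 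Discarding the indices with $w_v=0$, let $\gamma_1,\dots,\gamma_\ell$ denote the distinct elements of $S_\uu$ with nonzero weight $w_{\gamma_i}$; since $\bsw\neq\bszero$ we have $1\le\ell\le|\uu|$.

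First I would isolate the poles $u\in\{-\gamma_1,\dots,-\gamma_\ell\}$. By the convention $\overline{0}=0$, at $u=-\gamma_i$ the summand equals $\psi\bigl(\sum_{j\neq i}w_{\gamma_j}\,\overline{\gamma_j-\gamma_i}\bigr)$, a root of unity, so these $\ell$ terms contribute at most $\ell$ in absolute value. For every other $u$ one has $\overline{u+\gamma_i}=(u+\gamma_i)^{-1}$, so the remaining sum is $\sum_{u}\psi(R(u))$ over the non-poles, where $R(x)=\sum_{i=1}^\ell w_{\gamma_i}/(x+\gamma_i)$. The heart of the argument is to bound this rational character sum by Weil's theorem. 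Writing $R=f/g$ with $g(x)=\prod_{i=1}^\ell(x+\gamma_i)$ and $\gcd(f,g)=1$, the denominator is squarefree of degree $\ell$ with $\ell$ simple poles and $\deg f\le\ell-1<\deg g$. The one hypothesis to check is non-degeneracy, namely that $R$ is not of the form $h^p-h+c$: this holds because any such function has every pole of order divisible by $p$ and hence no simple poles, whereas $R$ has only simple poles (each $w_{\gamma_i}\neq0$). Weil's bound in the sharp form that exploits $\deg f<\deg g$ — equivalently, passing to the complete sum over $\mathbb{P}^1(\F_q)$, whose value at the point at infinity is $\psi(0)=1$ — then gives $\bigl|\sum_{u}\psi(R(u))\bigr|\le(2\ell-2)q^{1/2}+1$ for the sum over non-poles.

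Combining the two parts and using $\ell\le|\uu|$ yields
\[
\left|\sum_{n=0}^{q-1}\psi(\bsw\cdot\bsz_{n,\uu})\right|\le(2\ell-2)q^{1/2}+1+\ell\le(2|\uu|-2)q^{1/2}+|\uu|+1,
\]
and since this is uniform in the admissible $\bsw$, it holds for the maximum. I expect the main obstacle to be the rational-function Weil estimate with exactly this constant: obtaining the factor $2\ell-2$ rather than the cruder $2\ell-1$ is what forces the final bound to come out as stated, and the case $\ell=1$ — where the non-pole sum equals $-1$ — shows that the additive $+1$ cannot be dropped. The separation of the poles, the reduction to the $\ell$ nonzero weights, and the verification of the non-degeneracy hypothesis are comparatively routine.
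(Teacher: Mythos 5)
Your proof is correct and follows essentially the same route as the paper: isolate the $O(|\uu|)$ pole terms, reduce to the rational character sum $\sum_{u}\psi(f(u)/g(u))$ with $\deg f<\deg g$, check that $f/g$ is not of the form $A^p-A$ (the paper cites \cite[Lemma 2]{niwi00}, you argue via the simple poles, which amounts to the same thing), and invoke the Moreno--Moreno/Weil bound $(2\ell-2)q^{1/2}+1$. Your preliminary discarding of the zero weights even gives a marginally sharper intermediate estimate, but the final bound is identical.
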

\begin{proof}
 Note that the sums to be estimated are of the form
 $$S_q:=\sum_{n=0}^{q-1}\psi(\bsw \cdot \bsz_{n,\mathfrak{u}})=\sum_{u\in \F_q}\psi\left(\sum_{i=1}^{|\uu|} w_i\, \overline{u+v_i}\right)$$
 for some $(w_1,\ldots,w_{|\mathfrak{u}|})\in \F_q^{|\mathfrak{u}|}\setminus\{\bf 0\}$ and $(v_1,\ldots,v_{|\mathfrak{u}|})\in \F_q^{|\mathfrak{u}|}$ with pairwise distinct coordinates 
 $v_i\ne v_j$ if $i\ne j$. 
 
 We proceed as in the proof of \cite[Theorem 1]{niwi00}.
  We have
 $$|S_q|\le |\mathfrak{u}|+\left|\sum_{u\in \F_q, g(u)\ne 0} \psi\left(\frac{f(u)}{g(u)}\right)\right|,$$
 where 
 $$f(x)=\sum_{i=1}^{|\mathfrak{u}|} w_i \prod_{j=1 \atop j\ne i}^{|\mathfrak{u}|} (x+v_j)$$
 and 
 $$g(x)=\prod_{j=1}^{|\mathfrak{u}|} (x+v_j).$$
 Since at least one of the $w_i$ is non-zero and the $v_i$ are distinct, we have $f(-v_i)=w_i \prod_{j\ne i} (v_j-v_i)\ne 0$ and $f$ is not the zero polynomial. 
 Since $\deg(f)<\deg(g)$, by \cite[Lemma 2]{niwi00} the rational function $\frac{f(x)}{g(x)}$ is not of the form $A^p-A$ with some rational function over the algebraic closure of $\F_q$. 
 Hence, we can apply a bound of Moreno and Moreno \cite[Theorem 2]{momo} (see also \cite[Lemma 1]{niwi00}) and the result follows. 
\end{proof}

For the second point set, which was essentially studied
in~\cite{chen2008,wi06},
we also give an analogous character sum bound.
\begin{lemma}
\label{le0a}
  Let $\{\bsz_0,\ldots,\bsz_{T-1}\}$ be given by
  ~\eqref{eq:definition_winterhof06} of size $T$ and let
$\emptyset \not=\uu \subseteq [s]$. Then we have
  \begin{equation*}
    \max_{\bsw\in \F_q^{|\mathfrak{u}|}\setminus\{\bf 0\}}
    \left|\sum_{n=0}^{T-1} \psi(\bsw\cdot \bsz_{n,\mathfrak{u}})
    \right|\le 2|\mathfrak{u}|q^{1/2}+|\mathfrak{u}|. 
\end{equation*}
\end{lemma}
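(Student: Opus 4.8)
The plan is to reduce the incomplete sum over the cyclic group $H=\langle\theta\rangle=\{\theta^n:n=0,\ldots,T-1\}$ of order $T$ to a family of \emph{complete} mixed character sums over $\F_q^\ast$, to which Weil-type estimates apply much as in Lemma~\ref{le0}. Writing $m:=|\uu|$ and letting $v_1,\ldots,v_m$ be the pairwise distinct coordinates picked out by $\uu$ and $\bsw=(w_1,\ldots,w_m)\neq\bszero$, the sum to bound is
$$S_T=\sum_{h\in H}\psi\Big(\sum_{i=1}^m w_i\,\overline{h+v_i}\Big).$$
First I would split off the exceptional set $H_0=\{h\in H:\ h=-v_i \text{ for some } i\}$, which has $|H_0|\le m$: on $H_0$ each summand has modulus $1$, so it contributes at most $m$. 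On the complement $H\setminus H_0$ the inversion $\overline{h+v_i}$ coincides with the genuine rational function $R(x)=\sum_{i=1}^m w_i/(x+v_i)=f(x)/g(x)$, where $g(x)=\prod_{i=1}^m(x+v_i)$ has $m$ distinct roots and $\deg f<\deg g=m$ with $f\neq 0$, exactly as in the proof of Lemma~\ref{le0}.

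Next I would detect membership in $H$ by multiplicative characters. Since $H$ is the subgroup of $\F_q^\ast$ of order $T$, orthogonality gives, for every $x\in\F_q$ (with the convention $\chi(0)=0$),
$$\mathbf{1}_{x\in H}=\frac{T}{q-1}\sum_{\chi\in\mathcal{H}}\chi(x),$$
where $\mathcal{H}$ is the group of the $(q-1)/T$ multiplicative characters of $\F_q$ trivial on $H$. Substituting this and interchanging summation turns the main part of $S_T$ into
$$\frac{T}{q-1}\sum_{\chi\in\mathcal{H}}\ \sum_{\substack{x\in\F_q\\ g(x)\neq0}}\chi(x)\,\psi(R(x))=:\frac{T}{q-1}\sum_{\chi\in\mathcal{H}}V(\chi).$$
If each complete sum satisfies $|V(\chi)|\le 2m\,q^{1/2}$, then—because there are $|\mathcal{H}|=(q-1)/T$ of them—the prefactor $T/(q-1)$ cancels the count and leaves $2m\,q^{1/2}$; adding the exceptional terms yields $|S_T|\le 2m\,q^{1/2}+m$, as claimed.

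It remains to bound $V(\chi)$. For the trivial character $\chi_0\in\mathcal{H}$, $V(\chi_0)=\sum_{x\neq0,\,g(x)\neq0}\psi(R(x))$ is a pure additive sum; since $R$ has only simple poles and $\deg f<\deg g$, it is not of the form $A^p-A$, and the Moreno--Moreno bound \cite{momo} (invoked already in Lemma~\ref{le0}) gives an estimate comfortably below $2m\,q^{1/2}$. For nontrivial $\chi\in\mathcal{H}$ the character $x\mapsto\chi(x)\,\psi(R(x))$ is nonprincipal, $g$ has $m$ distinct (simple) poles, and $\chi$ contributes one further tame point at $x=0$; a Weil-type bound for mixed character sums of rational functions (as used in \cite{wi06,chen2008}) then bounds $|V(\chi)|$ by $2m\,q^{1/2}$.

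The main obstacle is the uniform non-degeneracy check together with pinning down the constant: one must verify for \emph{every} $\chi\in\mathcal{H}$ that $\chi(x)\psi(R(x))$ is genuinely nonprincipal, so that Weil's theorem applies, and that the relevant degree (equivalently, the conductor coming from the $m$ simple poles of $R$ and the point $x=0$) contributes exactly the factor $2m$ rather than something larger. The trivial-character term $\chi=\chi_0$ must be handled separately via Moreno--Moreno, and everything else—the orthogonality relation and the at most $m$ exceptional nodes—is routine bookkeeping.
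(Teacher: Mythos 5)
Your argument is correct and is essentially the proof the paper itself defers to: the paper's ``proof'' of Lemma~\ref{le0a} is a one-line reference to the proof of Theorem~1 in \cite{wi06}, which proceeds exactly as you describe --- detect the subgroup $\langle\theta\rangle$ of order $T$ with the $(q-1)/T$ multiplicative characters trivial on it, separate the at most $|\uu|$ terms with $\theta^n=-v_i$, and bound each complete (mixed) sum over $\F_q$ by $2|\uu|q^{1/2}$ using a Weil-type estimate, with the principal-character term handled by the Moreno--Moreno bound already used in Lemma~\ref{le0}. Your bookkeeping (the factor $T/(q-1)$ cancelling the $(q-1)/T$ characters, and the additive $+|\uu|$ from the exceptional nodes) reproduces the stated constant, so nothing further is needed beyond citing the precise mixed character sum bound.
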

\begin{proof}
The proof is analogous to the proof of ~\cite[Theorem 1]{wi06}.
\end{proof}

\section{The weighted star discrepancy}\label{sec_wdisc}

For an $N$-element point set ${\cal P}_s$ in $[0,1)^s$ the {\it local
discrepancy} $\Delta_{{\cal P}_s}$ is defined
as $$\Delta_{\cP_s}(\bsalpha)=\frac{|{\cal P}_s\cap
  [\bszero,\bsalpha)|}{N}-{\rm Volume}([\bszero,\bsalpha))$$ 
for $\bsalpha=(\alpha_1,\ldots,\alpha_s) \in [0,1]^s$. The {\it star
discrepancy} is then the $L_{\infty}$-norm of the local
discrepancy, $$D_N^{\ast}({\cal P}_s)=\|\Delta_{\cP_s}\|_{L_{\infty}}.$$ 

We consider the weighted star discrepancy. The study of weighted
discrepancy has been 
initiated by Sloan and Wo\'{z}niakowski~\cite{SW} in 1998 in order
to overcome the curse of dimensionality.
Their basic idea was to introduce a set of weights
$\bsgamma=\{\gamma_{\uu}\ : \ \emptyset\ne {\mathfrak u}\subseteq
[s]\}$ which consists of  
non-negative real numbers $\gamma_\uu$. A simple choice of weights are
so-called product weights $(\gamma_j)_{j \ge 1}$, where
$\gamma_\uu = \prod_{j\in \uu} \gamma_j$.
In this case, the weight $\gamma_j$ is associated with the variable $x_j$.

\begin{definition}[Weighted star discrepancy]\rm
For given weights $\bsgamma$ and for a point set $\cP_s$ in $[0,1]^s$
the {\em weighted star discrepancy} is defined as  
$$D_{N,{\bsgamma}}^*({\cal P}_s)=
\max_{\emptyset\ne {\mathfrak u}\subseteq [s]} \gamma_{\mathfrak
  u}\sup_{\bsalpha\in [0,1]^s}|\Delta_{\cP_s}((\bsalpha_{\mathfrak u},\bsone))|,$$ 
where for $\bsalpha=(\alpha_1,\ldots,\alpha_s) \in [0,1]^s$ and for
$\uu \subseteq [s]$ we put $(\bsalpha_{\uu},\bsone)=(y_1,\ldots,y_s)$
with  
$$y_j=\left\{ 
\begin{array}{ll}
\alpha_j & \mbox{ if } j \in \uu,\\
1 & \mbox{ if } j \not\in \uu. 
\end{array}\right.$$
\end{definition}
 
\begin{remark}\rm
Let $\mathcal{F}_1$ be the space of functions with finite
norm $$\|f\|_{\mathcal{F}_1}:=\sum_{\uu \subseteq [s]}
\frac{1}{\gamma_{\uu}} \int_{[0,1]^{|\uu|}}
\left|\frac{\partial^{|\uu|}f}{\partial
    \bsx_{\uu}}(\bsx_\uu,\bsone)\right| \rd \bsx_{\uu},$$ where for $\uu=\emptyset$ we put $\frac{\partial^{|\uu|}f}{\partial
    \bsx_{\uu}}(\bsx_\uu,\bsone)=f(1,1,\ldots,1) $. Then it was
shown in \cite[p.~12]{SW} that the weighted star discrepancy of a point set
$\cP_s$ is an upper bound for the worst-case error of the QMC rule $Q_N$ based on
$\cP_s$, that is, $$e(Q_N,\mathcal{F}_1) \le D_{N,{\bsgamma}}^*({\cal P}_s).$$
\end{remark}

It is well known that there is a close connection between discrepancy
and character sums. In discrepancy theory such relations are known
under the name {\it ``Erd\H{o}s-Tur\'{a}n-Koksma inequalities''}. One
particular instance of an Erd\H{o}s-Tur\'{a}n-Koksma inequality is
given in the following lemma which is perfectly suited for our
applications. Before stating the result, we introduce the following
auxiliary function.  For $q= p^k$, we define
\begin{equation}
  \label{eq:auxiliar}
  \cT(q,s) =
  \begin{cases}
    \left(\frac{k}{2}+1\right)^s & \mbox{if }p=2,\\ 
    \left(\frac{2}{\pi}\log p+\frac{7}{5}\right)^sk^s & \mbox{if }
                                           p>2.\\
  \end{cases}
\end{equation}
The result is the following:
\begin{lemma}\label{le1}
For $q= p^k$ and $\bsz_0,\ldots,\bsz_{N-1}\in \F_q^s$, let ${\cal
  P}_s=\{\bsx_0,\ldots,\bsx_{N-1}\}$ be the $N$-element point set
defined by $\eqref{z}$ and $\eqref{x}$. 
 Then we have 
 $$D_N^*({\cal P}_s)\le  \frac{s}{q}+\frac{\cT(q,s)}{N} \max_{\bsw\in
   \F_q^s\setminus \{\bf 0\}}\left|\sum_{n=0}^{N-1} \psi(\bsw\cdot
   \bsz_n) \right|,
 $$ 
 where $\cT(q,s)$ is defined as in~\eqref{eq:auxiliar}.
\end{lemma}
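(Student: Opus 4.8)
The plan is to exploit the fact that every point $\bsx_n$ has coordinates that are integer multiples of $p^{-k}=1/q$, so that $\cP_s$ is supported on the grid $G=\{(a_1/q,\ldots,a_s/q): a_\ell\in\{1,\ldots,q\}\}$. This lattice structure produces the $s/q$ term through a rounding argument and lets me expand box indicators \emph{exactly} in additive characters of $\F_q$. First I would reduce the supremum defining $D_N^*(\cP_s)$ to anchors $\bsalpha'\in G$: given arbitrary $\bsalpha\in[0,1]^s$, round each coordinate up to the next grid value, $\alpha_\ell'=\lceil q\alpha_\ell\rceil/q$. Since no point of $\cP_s$ has a coordinate strictly between consecutive grid values, the boxes $[\bszero,\bsalpha)$ and $[\bszero,\bsalpha')$ contain exactly the same points, while their volumes differ by at most $s/q$. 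Hence $|\Delta_{\cP_s}(\bsalpha)|\le|\Delta_{\cP_s}(\bsalpha')|+s/q$, and it remains to bound $\max_{\bsalpha'\in G}|\Delta_{\cP_s}(\bsalpha')|$.

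Second, I would use that the $q$ base-$p$ Walsh functions ${}_p\mathrm{wal}_h$, $0\le h<q$, form a complete orthonormal system on the one-dimensional grid $\{0,1/q,\ldots,(q-1)/q\}$, so their products form a complete system for $G$-supported functions. Writing $\{\eta_1,\ldots,\eta_k\}$ for the dual basis of $\{\beta_1,\ldots,\beta_k\}$ (i.e. $\mathrm{Tr}(\beta_i\eta_j)=\delta_{ij}$), a direct digit computation gives ${}_p\mathrm{wal}_{\bsh}(\bsx_n)=\psi(\bsw\cdot\bsz_n)$, where $\bsw\in\F_q^s$ is the image of $\bsh$ under the dual basis; as $\bsh$ runs through $\{0,\ldots,q-1\}^s$, $\bsw$ runs bijectively through $\F_q^s$. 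Expanding the grid-restricted indicator $\mathbf{1}_{[\bszero,\bsalpha')}=\prod_{\ell}\mathbf{1}_{[0,a_\ell/q)}$ in these functions, the coefficient at $\bsh$ factorizes as $\prod_\ell\hat c_\ell(h_\ell)$, the $\bsh=\bszero$ coefficient equals $\mathrm{Vol}([\bszero,\bsalpha'))$, and subtracting it yields
\[
\Delta_{\cP_s}(\bsalpha')=\sum_{\bsh\ne\bszero}\Big(\prod_{\ell}\hat c_\ell(h_\ell)\Big)\,\frac{1}{N}\sum_{n=0}^{N-1}\psi(\bsw(\bsh)\cdot\bsz_n).
\]
Pulling out the maximal character sum gives $|\Delta_{\cP_s}(\bsalpha')|\le \frac1N\big(\sum_{\bsh\ne\bszero}\prod_\ell|\hat c_\ell(h_\ell)|\big)\max_{\bsw\ne\bszero}|\sum_n\psi(\bsw\cdot\bsz_n)|$, and by the product structure $\sum_{\bsh\ne\bszero}\prod_\ell|\hat c_\ell(h_\ell)|\le\prod_\ell\big(\sum_{h=0}^{q-1}|\hat c_\ell(h)|\big)$.

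Third, it remains to show that each one-dimensional factor $\sum_{h=0}^{q-1}|\hat c_\ell(h)|$ is at most the per-coordinate factor of $\cT(q,s)$, namely $k/2+1$ for $p=2$ and $(\frac2\pi\log p+\frac75)k$ for $p>2$. This is the main obstacle: one must explicitly estimate the sum of absolute base-$p$ Walsh coefficients of an interval indicator. Computing these coefficients digit by digit turns the sum into a sum over the $k$ digit positions, producing the linear factor $k$; the contribution of a single position is controlled by $\frac1p\sum_{h=1}^{p-1}1/\sin(\pi h/p)$ up to a bounded additive constant, and the estimate $\frac1p\sum_{h=1}^{p-1}1/\sin(\pi h/p)\le\frac2\pi\log p+O(1)$ is exactly what yields the constant $\frac2\pi\log p+\frac75$ for $p>2$, whereas for $p=2$ the single nonzero digit character contributes only $1/2$ per position, giving the sharper bound $k/2+1$ (the extra $1$ absorbing the $\bsh=\bszero$ coefficient $a_\ell/q\le1$). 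This case distinction is precisely the source of the two branches in the definition of $\cT(q,s)$. Substituting these one-dimensional bounds and adding back the $s/q$ reduction term completes the proof.
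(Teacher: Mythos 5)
Your argument is correct, and its essential mechanism --- identifying the digit-level characters $\exp\bigl(\tfrac{2\pi\icomp}{p}\sum_{i,j}h_{ij}c_{n,i}^{(j)}\bigr)$ with the additive characters $\psi(\bsw\cdot\bsz_n)$ via the dual basis, with nonzero digit vectors corresponding bijectively to nonzero $\bsw\in\F_q^s$ --- is exactly the step the paper carries out. The difference is in how the two proofs handle the Erd\H{o}s--Tur\'an--Koksma part: the paper simply invokes \cite[Theorem 3.12 and Lemma 3.13]{ni92} to get $D_N^*(\cP_s)\le \frac{s}{q}+\frac{\cT(q,s)}{N}\max_{H\ne 0}|S_N(H)|$ as a black box and then performs only the dual-basis conversion, whereas you reconstruct that inequality from scratch (grid rounding for the $s/q$ term, Walsh/character expansion of the box indicator, and the coefficient estimates $k/2+1$ resp.\ $(\tfrac{2}{\pi}\log p+\tfrac{7}{5})k$ per coordinate). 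Your reconstruction is structurally sound and correctly locates the origin of both branches of $\cT(q,s)$, including the $1/(2\sin(\pi h/p))$ geometric-sum estimate behind the $\tfrac{2}{\pi}\log p$ term; the only part left at sketch level is pinning down the exact additive constants ($\tfrac{7}{5}$ and the $+1$), which in the paper are inherited verbatim from Niederreiter's lemma rather than re-derived. So your route buys self-containedness at the cost of redoing a known estimate; the paper's buys brevity at the cost of opacity about where $\cT(q,s)$ comes from.
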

\begin{proof} 
For a non-zero $s\times k$ matrix $H=(h_{ij})$ with entries $h_{ij}\in
(-p/2,p/2)\cap {\mathbb Z}$ we define the exponential sum 
$$S_N(H)=\sum_{n=0}^{N-1} \exp\left(\frac{2\pi \icomp}{p} \sum_{i=1}^s
  \sum_{j=1}^k h_{ij} c_{n,i}^{(j)}\right),$$ 
where the $c_{n,i}^{(j)}\in \F_p$ are defined by $\eqref{z}$ and where
$\icomp=\sqrt{-1}$. 
By a general discrepancy bound taken from \cite[Theorem 3.12 and Lemma
3.13]{ni92} we get 
$$D_N^*({\cal P}_s)\le \frac{s}{q}+\frac{\cT(q,s)}{N}\max_{H\ne 0}|S_N(H)|,$$
where the maximum is extended over all non-zero matrices $H$  with
entries $h_{ij}\in (-p/2,p/2)\cap {\mathbb Z}$. 

Let $\{\delta_1,\ldots,\delta_k\}$ be the dual basis of the given
ordered basis $\{\beta_1,\ldots,\beta_k\}$ of $\F_q$ over $\F_p$, that
is,
\begin{equation*}
  {\rm Tr}(\delta_j\beta_i)=
  \begin{cases}
    1 &\mbox{if } i=j,\\ 
    0 &\mbox{if } i\ne j,
  \end{cases}
\end{equation*}
where ${\rm Tr}$ denotes the trace function from $\F_q$ to $\F_p$. 
For any basis, there exists a dual basis and this basis is unique, see
\cite[p.\ 55]{lini94} for a proof. Then we have
$$c_{n,i}^{(j)}={\rm Tr}(\delta_jz_{n,i}) \quad \mbox{for } 1\le j\le k, ~1\le i\le s, \mbox{ and } n\in \NN_0,$$
where $\bsz_n=(z_{n,1},\ldots,z_{n,s})$. Therefore
\begin{eqnarray*}
S_N(H)&=&\sum_{n=0}^{N-1} \exp\left(\frac{2\pi \icomp}{p} \ \sum_{i=1}^s \sum_{j=1}^k h_{ij}{\rm Tr}(\delta_j z_{n,i})\right)\\
&=&\sum_{n=0}^{N-1} \exp\left(\frac{2\pi \icomp}{p} \ {\rm Tr}\left(\sum_{i=1}^s \sum_{j=1}^k h_{ij}\delta_j z_{n,i}\right)\right)\\
&=&\sum_{n=0}^{N-1} \psi\left(\sum_{i=1}^s \sum_{j=1}^k h_{ij}\delta_j z_{n,i}\right).
\end{eqnarray*}
Put
$$\bsw=(w_1,\ldots,w_s)\in \F_q^s\mbox{ with } w_i=\sum_{j=1}^k h_{ij}\delta_j\mbox{ for }i=1,\ldots,s.$$
Since $H$ is not the zero matrix and $\{\delta_1,\ldots,\delta_k\}$ is
a basis of $\F_q$ over $\F_p$, it follows that $\bsw$ is not the zero
vector. This fact finishes the proof.
\end{proof}
Now we extend the star discrepancy estimate from Lemma~\ref{le1} to
the weighted star discrepancy: 

\begin{lemma}\label{le3}
For $\bsz_0,\ldots,\bsz_{N-1}\in \F_q^s$ let ${\cal
  P}_s=\{\bsx_0,\ldots,\bsx_{N-1}\}$ be the $N$-element point set
defined by $\eqref{z}$ and $\eqref{x}$. 
Then we have 
\begin{equation*}
D_{N,\bsgamma}^*({\cal P}_s)\le  \max_{\emptyset \ne
  \mathfrak{u}\subseteq [s]} \gamma_{\mathfrak{u}} \left (  \frac{|\mathfrak{u}|}{q}
 + \frac{\cT(q,|\mathfrak{u}|)}{N}
 \max_{\bsw\in \F_q^{|\mathfrak{u}|}\setminus\{\bf 0\}} \left|\sum_{n=0}^{N-1} \psi(\bsw\cdot \bsz_{n,\mathfrak{u}}) \right|\right).
\end{equation*}
where $\bsz_{n,\mathfrak{u}}\in \F_q^{|\mathfrak{u}|}$ is the projection of $\bsz_n$ to the coordinates indexed by 
 $\mathfrak{u}$.
\end{lemma}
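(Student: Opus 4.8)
The plan is to reduce the weighted star discrepancy to a maximum over nonempty subsets $\mathfrak{u}\subseteq[s]$ of the ordinary star discrepancy of the \emph{projected} point set, and then to apply Lemma~\ref{le1} to each such projection. First I would recall the definition
\[
D_{N,\bsgamma}^*(\cP_s)=\max_{\emptyset\ne\mathfrak{u}\subseteq[s]}\gamma_{\mathfrak{u}}\sup_{\bsalpha\in[0,1]^s}\left|\Delta_{\cP_s}((\bsalpha_{\mathfrak{u}},\bsone))\right|,
\]
and observe that in the anchored local discrepancy $\Delta_{\cP_s}((\bsalpha_{\mathfrak{u}},\bsone))$ every coordinate outside $\mathfrak{u}$ is set to $1$. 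Since the box $[\bszero,(\bsalpha_{\mathfrak{u}},\bsone))$ imposes no constraint on the coordinates indexed by $[s]\setminus\mathfrak{u}$ (the condition $x_j<1$ holds for every point of $[0,1)^s$), both the counting term and the volume term depend only on the coordinates in $\mathfrak{u}$. Hence
\[
\sup_{\bsalpha\in[0,1]^s}\left|\Delta_{\cP_s}((\bsalpha_{\mathfrak{u}},\bsone))\right|=D_N^*\bigl(\cP_{s,\mathfrak{u}}\bigr),
\]
where $\cP_{s,\mathfrak{u}}=\{\bsx_{0,\mathfrak{u}},\ldots,\bsx_{N-1,\mathfrak{u}}\}\subset[0,1)^{|\mathfrak{u}|}$ is the projection of $\cP_s$ onto the coordinates in $\mathfrak{u}$. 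This identity is the conceptual heart of the argument.

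The second step is to note that the projected point set $\cP_{s,\mathfrak{u}}$ is itself a digital point set of the type covered by Lemma~\ref{le1}. Indeed, the digit vectors of $\bsx_{n,\mathfrak{u}}$ are exactly the projections $\bsc_{n,\mathfrak{u}}^{(j)}\in\F_p^{|\mathfrak{u}|}$ of the digit vectors $\bsc_n^{(j)}$, so $\cP_{s,\mathfrak{u}}$ is derived by \eqref{z} and \eqref{x} from the projected field vectors $\bsz_{0,\mathfrak{u}},\ldots,\bsz_{N-1,\mathfrak{u}}\in\F_q^{|\mathfrak{u}|}$. Applying Lemma~\ref{le1} with $s$ replaced by $|\mathfrak{u}|$ and the point set $\cP_{s,\mathfrak{u}}$ gives
\[
D_N^*\bigl(\cP_{s,\mathfrak{u}}\bigr)\le\frac{|\mathfrak{u}|}{q}+\frac{\cT(q,|\mathfrak{u}|)}{N}\max_{\bsw\in\F_q^{|\mathfrak{u}|}\setminus\{\bf 0\}}\left|\sum_{n=0}^{N-1}\psi(\bsw\cdot\bsz_{n,\mathfrak{u}})\right|.
\]
Multiplying by $\gamma_{\mathfrak{u}}$ and taking the maximum over all nonempty $\mathfrak{u}\subseteq[s]$ then yields the claimed bound.

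The step I expect to require the most care is the projection identity itself, namely verifying rigorously that setting the coordinates outside $\mathfrak{u}$ equal to $1$ reduces the supremum of $|\Delta_{\cP_s}|$ exactly to the ordinary star discrepancy of the projected points, with no loss or spurious contribution from the suppressed coordinates. One must check that the counting fraction $|\cP_s\cap[\bszero,(\bsalpha_{\mathfrak{u}},\bsone))|/N$ equals $|\cP_{s,\mathfrak{u}}\cap[\bszero,\bsalpha_{\mathfrak{u}})|/N$ because each point $\bsx_n$ automatically satisfies $x_{n,j}<1$ for $j\notin\mathfrak{u}$, and that the volume $\mathrm{Volume}([\bszero,(\bsalpha_{\mathfrak{u}},\bsone)))$ equals $\mathrm{Volume}([\bszero,\bsalpha_{\mathfrak{u}}))$ since the factors corresponding to $j\notin\mathfrak{u}$ equal $1$. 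Once this identity is established, the remainder is a direct application of Lemma~\ref{le1} together with the elementary observation that the maximum over $\mathfrak{u}$ distributes over the sum of the two nonnegative terms in the bound.
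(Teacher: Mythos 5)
Your proposal is correct and follows essentially the same route as the paper: the authors likewise invoke the bound $D_{N,\bsgamma}^{\ast}(\cP_s) \le \max_{\emptyset \ne \uu \subseteq [s]} \gamma_{\uu} D_N^{\ast}(\cP_\uu)$ for the projected point sets and then apply Lemma~\ref{le1} to each projection. The only difference is that you spell out the projection identity and the fact that $\cP_{s,\uu}$ is again a digital point set of the form covered by Lemma~\ref{le1}, details the paper leaves implicit.
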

     
\begin{proof}
The result follows immediately from Lemma~\ref{le1} together with the
fact that $$D_{N,\bsgamma}^{\ast}(\cP_s) \le \max_{\emptyset \not=\uu
  \subseteq [s]} \gamma_{\uu} D_N^{\ast}(\cP_\uu),$$ where $\cP_\uu$
consists of the points from $\cP_s$ projected onto the components whose
indices belong to $\uu$.  
\end{proof}
The previous lemma gives us our first main result.
\begin{theorem}\label{thm1}
For the point set $\cP_s$ defined as in Definition~\ref{def1} the following bound holds:
\begin{equation*}
  D_{q,\bsgamma}^*({\cal P}_s)\le \max_{\emptyset \ne
  \mathfrak{u}\subseteq [s]} \gamma_{\mathfrak{u}}|\mathfrak{u}|
\left ( \frac{1}{q} + \frac{3\cT(q,|\mathfrak{u}|)
}{q^{1/2}} \right ). 
\end{equation*}
For the point set $\cP_s$ defined as in Definition~\ref{def2} the following bound holds:
\begin{equation*}
  D_{T,\bsgamma}^*({\cal P}_s)\le \max_{\emptyset \ne
  \mathfrak{u}\subseteq [s]} \gamma_{\mathfrak{u}}|\mathfrak{u}|
\left ( \frac{1}{q} + \frac{3\cT(q,|\mathfrak{u}|)
q^{1/2}}{T} \right ). 
\end{equation*}
\end{theorem}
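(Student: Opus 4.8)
The plan is to obtain both inequalities directly from the weighted Erd\H{o}s-Tur\'{a}n-Koksma bound of Lemma~\ref{le3} by inserting the character sum estimates of Lemma~\ref{le0} and Lemma~\ref{le0a} and then absorbing the lower-order terms. Lemma~\ref{le3} already expresses the weighted star discrepancy as a maximum over nonempty $\mathfrak{u}\subseteq[s]$ of a quantity built from $\gamma_{\mathfrak{u}}$, the term $|\mathfrak{u}|/q$, the factor $\cT(q,|\mathfrak{u}|)/N$, and the inner character sum $\max_{\bsw}\left|\sum_{n}\psi(\bsw\cdot\bsz_{n,\mathfrak{u}})\right|$. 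Hence the only remaining work is to bound that inner sum by the appropriate lemma, simplify, and factor out $|\mathfrak{u}|$.

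First, for the point set of Definition~\ref{def1} I would set $N=q$ and invoke Lemma~\ref{le0}, which yields
\begin{equation*}
D_{q,\bsgamma}^*({\cal P}_s)\le \max_{\emptyset \ne \mathfrak{u}\subseteq [s]} \gamma_{\mathfrak{u}} \left(\frac{|\mathfrak{u}|}{q} + \frac{\cT(q,|\mathfrak{u}|)}{q}\left[(2|\mathfrak{u}|-2)q^{1/2}+|\mathfrak{u}|+1\right]\right).
\end{equation*}
Writing $w=|\mathfrak{u}|$, the claimed form follows once I verify the elementary inequality $(2w-2)q^{1/2}+w+1\le 3w\,q^{1/2}$, which rearranges to $w+1\le (w+2)q^{1/2}$ and holds because $q=p^k\ge 2$ gives $q^{1/2}\ge 1$. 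The second summand is then at most $3|\mathfrak{u}|\,\cT(q,|\mathfrak{u}|)/q^{1/2}$, and since the first summand already equals $|\mathfrak{u}|/q$, factoring out $|\mathfrak{u}|$ produces exactly the first bound.

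Next, for the point set of Definition~\ref{def2} I would set $N=T$ and invoke Lemma~\ref{le0a}, obtaining
\begin{equation*}
D_{T,\bsgamma}^*({\cal P}_s)\le \max_{\emptyset \ne \mathfrak{u}\subseteq [s]} \gamma_{\mathfrak{u}} \left(\frac{|\mathfrak{u}|}{q} + \frac{\cT(q,|\mathfrak{u}|)}{T}\left[2|\mathfrak{u}|q^{1/2}+|\mathfrak{u}|\right]\right).
\end{equation*}
Here the simplification is even shorter: I only need $2wq^{1/2}+w\le 3w\,q^{1/2}$, i.e.\ $w\le w\,q^{1/2}$, which again follows from $q^{1/2}\ge 1$. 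Bounding the second summand by $3|\mathfrak{u}|\,\cT(q,|\mathfrak{u}|)q^{1/2}/T$ and factoring out $|\mathfrak{u}|$ gives the second bound.

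I do not anticipate a genuine obstacle: both claims reduce immediately to Lemma~\ref{le3} and the two character sum bounds. The only point requiring a little care is the constant $3$, which is chosen precisely so that the leading $q^{1/2}$-term dominates the lower-order contributions via $q^{1/2}\ge 1$; in particular the slack coming from the $|\mathfrak{u}|+1$ (respectively $|\mathfrak{u}|$) terms is comfortably covered.
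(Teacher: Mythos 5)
Your proof is correct and follows the same route as the paper, which simply cites Lemma~\ref{le3} together with Lemmas~\ref{le0} and~\ref{le0a}; your explicit verification of the elementary inequalities $(2w-2)q^{1/2}+w+1\le 3w\,q^{1/2}$ and $2wq^{1/2}+w\le 3w\,q^{1/2}$ (both valid since $q\ge 2$) supplies the arithmetic the paper leaves implicit.
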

\begin{proof}
The result follows from Lemma~\ref{le3} and Lemmas~\ref{le0} and~\ref{le0a}.
\end{proof}

We point out that the discrepancy estimate from Theorem~\ref{thm1}
holds for every choice of weights. Also, it is important to remark
that point sets defined by~\eqref{eq:definition_winterhof06} are more
flexible in terms of size. It is easy to check that for any prime $p$
and $T$ not divisible by $p$, there exists $q=p^k$, such that $T$
divides $q-1$. This $k$ is the multiplicative order of $p$ modulo
$T$. Since the multiplicative group of $\F_q$ is cyclic there is an element $\theta\in \F_q$ of order $T$.

\paragraph{Tractability.} For a recent overview of results concerning
tractability properties of the weighted star discrepancy we refer to
\cite{dipiSETA,dipi14}.  
\begin{iremark}\rm
Unfortunately 
the proof of \cite[Theorem~3.2
(ii)]{dipi14} (also \cite[Theorem~7(2)]{dipiSETA}) is not correct and
hence this part of the theorem must be discarded. All other parts of these papers are correct.
\end{iremark}

We now restrict ourselves to product weights and present a condition
on the weights for strong polynomial tractability.  

\begin{theorem}\label{thm2}
Let $\cP_s$ be the point set from Definition~\ref{def1}.
Assume that for an ordered sequence of weights $\bsgamma=(\gamma_j)_{j
  \ge 1}$ with $\gamma_1 \ge \gamma_2 \ge \ldots$, there is a $0 \le
\delta < 1/2$ such that 
\begin{equation}\label{bed1}
\limsup\limits_{j\to \infty} j \gamma_j < \frac{\delta}{3}.
\end{equation}
Then there is a constant $c_{\bsgamma, \delta} > 0$, which depends
only on $\bsgamma$ and $\delta$ but not on $s$ such that for all $1\le s\le q$ we have
\begin{equation*}
D^*_{q,\bsgamma}({\cal P}_s) \le \frac{c_{\bsgamma, \delta}}{q^{1/2 - \delta}}.
\end{equation*}
If $\limsup_{j\to \infty} j \gamma_j = 0$, then the result holds for all $\delta > 0$.
\end{theorem}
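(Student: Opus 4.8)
The starting point is Theorem~\ref{thm1}, which gives
$$D^*_{q,\bsgamma}(\cP_s) \le \max_{\emptyset \ne \uu \subseteq [s]} \gamma_\uu |\uu| \left( \frac{1}{q} + \frac{3\cT(q,|\uu|)}{q^{1/2}} \right).$$
Since we are dealing with product weights, $\gamma_\uu = \prod_{j \in \uu} \gamma_j$, and the right-hand side depends on $\uu$ only through $|\uu|$ together with the product of the weights. The first step is to bound the product $\gamma_\uu$ for a set of cardinality $w := |\uu|$: because the weights are ordered decreasingly, the largest possible product over a $w$-element subset is $\gamma_1 \gamma_2 \cdots \gamma_w$, so I would replace the maximum over $\uu$ by a maximum over $w \in \{1,\ldots,s\}$ of $\left(\prod_{j=1}^w \gamma_j\right) w \left( \frac{1}{q} + \frac{3\cT(q,w)}{q^{1/2}} \right)$.

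The key obstacle, and the heart of the argument, is controlling the product $\cT(q,w) \prod_{j=1}^w \gamma_j$ uniformly in $w$ and in $q$. Recall from~\eqref{eq:auxiliar} that $\cT(q,w) = A^w k^w$ (or $A^w$ when $p=2$) for a constant $A$ depending on $p$; crucially $A$ grows only like $\log p$, while $k^w$ carries the dangerous exponential-in-$w$ factor. The plan is to show that the condition~\eqref{bed1}, $\limsup_j j\gamma_j < \delta/3$, forces the weights to decay fast enough to absorb this factor. Concretely, $\limsup_j j \gamma_j < \delta/3$ means there is $J_0$ with $j\gamma_j < \delta/3$ for all $j > J_0$, so that $\gamma_j < \delta/(3j)$ eventually; hence the tail of $\prod_{j=1}^w \gamma_j$ is dominated by $\prod (\delta/(3j))$, which decays super-exponentially (like $(\delta/3)^w / w!$) and kills any fixed geometric growth $A^w$. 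The delicate point is that $k = \log_p q$ itself grows with $q$, so the factor $k^w$ is not of fixed base; I would handle this by splitting the product and observing that $k^w \prod_{j=1}^w \gamma_j$ can be rewritten as $\prod_{j=1}^w (k\gamma_j)$, and then using $\gamma_j < \delta/(3j)$ to get $\prod_{j=1}^w k\gamma_j \lesssim \prod (k\delta/(3j))$. Since the number of nontrivial factors is at most $w \le s \le q = p^k$, and since for $j > 3k/\delta \cdot A$ (say) the factors $A k \gamma_j < 1$, only finitely many (in fact $O(k)$) factors exceed $1$, and their product grows at most polynomially in a controlled way that is still dominated by the $q^{-1/2}$ out front.

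The final step is assembling the bound. After extracting the largest product of weights and showing $\sup_w w \cT(q,w)\prod_{j\le w}\gamma_j \le C_{\bsgamma,\delta}\, q^{\delta}$ for a constant independent of $s$ (this is where the exponent $\delta$ enters: the $O(k)$ surviving factors, each bounded by a constant times $k$, contribute at most $k^{O(k)} = q^{o(1)}$, which one absorbs into $q^\delta$ using $\delta > 0$ and the precise threshold $\delta/3$), one divides by $q^{1/2}$ from Theorem~\ref{thm1} to obtain $D^*_{q,\bsgamma}(\cP_s) \le c_{\bsgamma,\delta}\, q^{\delta - 1/2}$, valid for all $1 \le s \le q$ since the bound no longer depends on $s$. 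The requirement $\delta < 1/2$ guarantees a genuine negative power of $q$. For the last sentence of the theorem, if $\limsup_j j\gamma_j = 0$ then the threshold $\delta/3$ can be taken arbitrarily small, so the eventual decay $\gamma_j = o(1/j)$ is even stronger and the same estimate goes through for every $\delta > 0$. The main difficulty I anticipate is the bookkeeping in the middle step: making precise how the finitely many "large" factors $k\gamma_j$ (those with $j \le O(k)$) contribute at most $q^\delta$ rather than something worse, since a naive bound $k^{O(k)}$ is only $q^{O(1)}$ and one must verify the constant in the exponent is genuinely absorbed by $\delta$.
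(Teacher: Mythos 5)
Your overall strategy coincides with the paper's: reduce the maximum over $\uu$ to a maximum over initial segments $\{1,\ldots,w\}$ using the ordering of the weights, rewrite $\cT(q,w)\prod_{j=1}^{w}\gamma_j$ as a product of factors of the form $c\gamma_j\log q$, use the hypothesis to get $\gamma_j<\delta/(3j)$ for all $j\ge L$, and observe that only the factors with $j\lesssim \delta\log q$ can exceed $1$, so the maximizing $w$ is at most of that order. Up to this point you are on the paper's track.

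The gap is in the decisive estimate, which you flag as ``the main difficulty'' but do not resolve, and the heuristic you offer in its place is quantitatively wrong. You claim the surviving large factors contribute at most $k^{O(k)}=q^{o(1)}$; this is false. For fixed $p$ and $k\to\infty$ one has $k^{ck}={\rm e}^{ck\log k}$ while $q^{\varepsilon}={\rm e}^{\varepsilon k\log p}$, so $k^{O(k)}$ outgrows every fixed power of $q$. More precisely, there are about $x:=c\delta(\log q)/3$ factors exceeding $1$ and each is at most of order $\log q$, so the naive bound $(\max\text{ factor})^{\#\text{factors}}$ is $(\log q)^{O(\log q)}=q^{O(\log\log q)}$, which is superpolynomial in $q$ and would destroy the theorem. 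What saves the argument --- and what your proposal never supplies --- is that the $j$-th large factor is bounded by $x/j$ rather than by $x$, so the factorial in the denominator produces the needed saving:
$$\prod_{j=L}^{\lfloor x\rfloor}\frac{x}{j}\;\le\;(L-1)!\,x^{1-L}\,\frac{x^{\lfloor x\rfloor}}{\lfloor x\rfloor!}\;\le\;(L-1)!\,x^{1-L}\,{\rm e}^{x},$$
using $x^{m}/m!\le {\rm e}^{x}$. Since $x=c\delta(\log q)/3$ with $c<3$, this yields ${\rm e}^{x}=q^{c\delta/3}<q^{\delta}$, which is exactly where the exponent $\delta$ and the threshold $\delta/3$ in \eqref{bed1} enter. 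Without this step the proof does not close.
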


\begin{proof}
We show the result for $p>2$ and when the weights satisfy~\eqref{bed1}
only. The other cases are proven in a similar way.
From Theorem~\ref{thm1}, the fact
that $r\le 2^r$ for $r\ge 1$ and using
the ordering of $\bsgamma$ we have 
\begin{eqnarray*}
D^*_{q, \bsgamma}({\cal P}_s) &\le& 
C^{(1)}\max_{r=1,\ldots, s}\frac{r\prod_{j=1}^{r}\left (
    \gamma_j k \left(\frac{2}{\pi} \log p + \frac{7}{5}  \right)
  \right)}{q^{1/2}}\\
&\le& C^{(1)} \max_{r=1,\ldots, s}\frac{\prod_{j=1}^{r}\left (
    2\gamma_j k \left(\frac{2}{\pi} \log p + \frac{7}{5}  \right)
  \right)}{q^{1/2}}.
\end{eqnarray*}
Notice that $ 2 k (\tfrac{2}{\pi} \log p + \tfrac{7}{5}  )$ can also be bounded by $c\log q$ for some $0 < c<3$. 
Now, let $\ell$ be the largest integer such that $
c\gamma_{\ell} \log q > 1$. Then we have
\begin{equation*}
D^*_{q, \bsgamma}({\cal P}_s) \le C^{(1)} \frac{\prod_{j=1}^\ell (c \gamma_j \log q)}{q^{1/2}}.
\end{equation*}

The condition $\limsup_{j\to \infty} j \gamma_j < \delta/3$ implies
that there is an $L > 0$ such that $j \gamma_j  < \delta/3$ for all $j
\ge L$.  Without loss of generality we may assume that $\ell \ge
L$. (Otherwise, if $\ell <L$, consider a new weight sequence
$\bsgamma'=(\gamma_j')_{j \ge 1}$ with $\gamma'_j=\gamma_j$ for all $j
\in \{1,\ldots,\ell\} \cup \{L,L+1,\ldots\}$ and
$\gamma_j'=\gamma_{\ell}$ for $j \in \{\ell+1,\ldots,L-1\}$, and hence
$\gamma_j \le \gamma_{j}'$ for all $j \ge 1$).

For $r \in \NN$ let
\begin{equation*}
c_r = \prod_{j=1}^r (c \gamma_j \log q),
\end{equation*}
so we have 
\begin{equation*}
\frac{c_{\ell}}{c_{\ell - 1}} = c \gamma_{\ell} \log q < \frac{c
  \delta}{3 \ell} \log q. 
\end{equation*}
By the definition of $\ell$ we have $c_{\ell - 1} < c_{\ell}$,
hence $$1 < \frac{c\delta}{3 \ell} \log q,$$ which implies $\ell <  c\delta
(\log q ) / 3$, or $\ell \le \lfloor c \delta (\log q) /3 \rfloor$. 

Therefore, there is a constant $C_{\bsgamma}^{(2)} > 0 $ such that
\begin{align*}
\prod_{j=1}^\ell (c \gamma_j \log q) = & \prod_{j=1}^{L-1} (c \gamma_j
                                         \log q) \prod_{j=L}^\ell (c
                                         \gamma_j \log q)\\ 
\le & C_{\bsgamma}^{(2)} (c\log q)^{L-1} \prod_{j=L}^{\lfloor c \delta
      (\log q ) / 3 \rfloor} \frac{c \delta \log q}{3 j}. 
\end{align*}
Let $x:=c \delta (\log q ) / 3$.
Then 
\begin{align*}
\prod_{j=1}^\ell (c \gamma_j \log q) \le & C_{\bsgamma}^{(2)}
                                           \left(\frac{c\log
                                           q}{x}\right)^{L-1} ((L-1)!)
                                           \ \frac{x^{\lfloor x
                                           \rfloor}}{\lfloor x
                                           \rfloor!}\\ 
\le & C_{\bsgamma}^{(2)} \left(\frac{3}{ \delta}\right)^{L-1}
      ((L-1)!)\ {\rm e}^x\\ 
= & C_{\bsgamma,\delta}^{(3)} \ q^{c \delta/3}\\
< & C_{\bsgamma,\delta}^{(3)} \ q^{\delta},
\end{align*}
where $C_{\bsgamma,\delta}^{(3)}=C_{\bsgamma}^{(2)} \left(\tfrac{3}{
    \delta}\right)^{L-1}  ((L-1)!)$ (note that $L$ only depends on
$\bsgamma$). This implies
$$
D^*_{q, \bsgamma}({\cal P}_s) \le C^{(1)} \frac{3 C_{\bsgamma,\delta}^{(3)}
}{q^{1/2-\delta}}$$ 
and finishes the proof. 
\end{proof}

\begin{remark}\rm
Note that $\sum_{j=1}^{\infty} \gamma_j < \infty$, together with the
monotonicity $\gamma_1 \ge \gamma_2 \ge \ldots$ implies $\limsup_{j\to
  \infty} j \gamma_j =0$.  
To see this let $\varepsilon>0$. From $\sum_{j=1}^{\infty} \gamma_j <
\infty$ it follows with the Cauchy condensation test that also
$\sum_{k=0}^{\infty} 2^k \gamma_{2^k}< \infty$. In particular, $2^k
\gamma_{2^k} \rightarrow 0$ for $k \rightarrow \infty$. This means that
$\gamma_{2^k} \le \varepsilon/2^{k+1}$ for $k$ large enough. Thus, for
large enough $j$ with $2^k \le j < 2^{k+1}$ we obtain $$\gamma_j \le
\gamma_{2^k} \le \frac{\varepsilon}{2^{k+1}} <
\frac{\varepsilon}{j}.$$ In particular, for $j$ large enough we have
$j \gamma_j < \varepsilon$. This implies that $$\limsup_{j\to \infty}
j \gamma_j =0.$$  

Of course the converse is not true in general (for example, $\gamma_j=1/(j \log j)$).
\end{remark}

\begin{corollary}\label{cor1}
With the notation and conditions as in Theorem~\ref{thm2}, in
particular $\limsup_{j \rightarrow \infty}  \gamma_j < \delta/3$, the
weighted star discrepancy (or, equivalently, integration in
$\mathcal{F}_1$) is strongly polynomially tractable with
$\varepsilon$-exponent at most  $2/(1-2 \delta)$. 
\end{corollary}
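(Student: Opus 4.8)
The plan is to translate the discrepancy bound of Theorem~\ref{thm2} directly into a bound on the information complexity and then read off strong polynomial tractability together with the exponent. First I would recall from the Remark preceding Theorem~\ref{thm2} that the weighted star discrepancy dominates the worst-case error for integration in $\mathcal{F}_1$, namely $e(Q_q,\mathcal{F}_1) \le D^*_{q,\bsgamma}(\cP_s)$. Hence any upper bound on $D^*_{q,\bsgamma}(\cP_s)$ that is uniform in $s$ immediately yields an upper bound on $e(N,s)$ with $N=q$ function evaluations, since the minimal worst-case error $e(N,s)$ is an infimum over all rules and is therefore no larger than the error of this particular QMC rule.

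The key step is to invoke Theorem~\ref{thm2}: under the condition $\limsup_{j\to\infty} j\gamma_j < \delta/3$ with $0\le\delta<1/2$, there is a constant $c_{\bsgamma,\delta}>0$, independent of $s$, such that for all $1\le s\le q$ we have $D^*_{q,\bsgamma}(\cP_s) \le c_{\bsgamma,\delta}\, q^{-(1/2-\delta)}$. To force the worst-case error below a prescribed $\varepsilon\in(0,1)$, I would require $c_{\bsgamma,\delta}\, q^{-(1/2-\delta)} \le \varepsilon$, which rearranges to $q \ge (c_{\bsgamma,\delta}/\varepsilon)^{1/(1/2-\delta)}$. Since we are free to choose $q=p^k$ (by the flexibility remarks following Theorem~\ref{thm1}, such prime powers exist in every dyadic range so the quantization costs only a constant factor), we may select $N=q$ with $q$ of order $\varepsilon^{-1/(1/2-\delta)} = \varepsilon^{-2/(1-2\delta)}$. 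This gives $N(\varepsilon,s) \le C_{\bsgamma,\delta}\, \varepsilon^{-2/(1-2\delta)}$ for a constant independent of $s$, which is exactly the definition of strong polynomial tractability with $\tau = 2/(1-2\delta)$. Taking the infimum over admissible exponents shows that the $\varepsilon$-exponent $\tau^\ast$ is at most $2/(1-2\delta)$.

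The main obstacle I anticipate is not the inequality manipulation but two bookkeeping points. First, one must ensure that the constraint $1\le s\le q$ in Theorem~\ref{thm2} is compatible with the tractability definition, which quantifies over \emph{all} $s\in\NN$: for a fixed $\varepsilon$ the chosen $q$ is a function of $\varepsilon$ alone, so one needs $q\ge s$ to hold, and this is where I would observe that if $s$ exceeds the $\varepsilon$-driven choice of $q$, one simply enlarges $q$ to at least $s$, which only helps the error and affects the exponent by at most a constant—this keeps the bound $N(\varepsilon,s)\le C\varepsilon^{-\tau}$ of the required $s$-independent form. Second, the passage from a real target $q$ to an actual prime power $p^k$ must be handled so that the rounding absorbs into the constant $C$; this is routine given that consecutive prime powers have bounded ratio in the relevant range. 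Once these are settled, the exponent $2/(1-2\delta)$ follows mechanically from inverting the rate $1/2-\delta$.
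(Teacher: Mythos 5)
Your argument is essentially the paper's proof: the paper sets $M=\lceil (c_{\bsgamma,\delta}\,\varepsilon^{-1})^{2/(1-2\delta)}\rceil$, takes $q$ to be the smallest prime power $\ge M$ (so $q<2M$ by Bertrand's postulate), and concludes $N(\varepsilon,s)\le q\le 2M$, which is exactly your inversion of the rate $1/2-\delta$ with the rounding absorbed into the constant. The one place where you go beyond the paper is your proposed treatment of the constraint $1\le s\le q$, and as stated that patch does not work: if you enlarge $q$ to at least $s$, the number of nodes $N=q$ grows with $s$, so you only obtain $N(\varepsilon,s)\le\max\bigl(s,\,C\varepsilon^{-2/(1-2\delta)}\bigr)$, which is not of the $s$-independent form required for strong polynomial tractability once $s$ exceeds $C\varepsilon^{-2/(1-2\delta)}$. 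To be fair, the paper's own proof silently ignores the case $s>q$ as well; a genuine repair would instead exploit the decay of the weights to reduce to a bounded number of relevant coordinates (for instance, projections $\uu$ containing an index $j$ with $\gamma_j\le\varepsilon$ contribute at most about $\varepsilon$ to the weighted star discrepancy via the trivial bound on the local discrepancy, so only the $O(\varepsilon^{-1})$ coordinates with $\gamma_j>\varepsilon$ need to be covered by the construction). Apart from this shared subtlety, your derivation of the exponent $2/(1-2\delta)$ is correct and follows the same route as the paper.
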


\begin{proof}
For $\varepsilon>0$ let $M:=\lceil (c_{\bsgamma,\delta}
\varepsilon^{-1})^{2/(1-2\delta)}\rceil$. Let $q$ be the smallest
prime power which is greater or equal to $M$. According to the
Postulate of Bertrand we have $q < 2M$. Then we have
$D_{q,\bsgamma}^{\ast}(\cP_s) \le \varepsilon$ and hence the information
complexity satisfies $$N(\varepsilon,s) \le q \le 2 M = 2 \lceil
(c_{\bsgamma,\delta} \varepsilon^{-1})^{2/(1-2\delta)}\rceil.$$ This means that we have strong polynomial tractability.
\end{proof} 

For the proof of Corollary~\ref{cor1} it is enough to use the construction of Definition~\ref{def1} with a prime $q$.
However, from a practical point of view the construction of Definition~\ref{def1} with any prime power $q$ and the construction of Definition~\ref{def2} provide more flexibility.
In particular the case $q=2^r$ can be efficiently implemented using (optimal) normal bases and the Itoh-Tsujii inversion algorithm, see \cite[Chapter 3]{J} and \cite{IT}, respectively.

\section{Integration of H\"older continuous, absolutely convergent Fourier series and cosine series}\label{sec_integ} 

\paragraph{Absolutely convergent Fourier series.} For $f \in L_2([0,1]^s)$ and $\bsh \in \ZZ^s$ 
we define 
the $\bsh$th Fourier coefficient of $f$ as $\widehat{f}(\bsh)=\int_{[0,1]^s} f(\bsx) {\rm e}^{-2 \pi \icomp \bsh \cdot \bsx} \rd \bsx$. Then we can associate to $f$ its Fourier series 
\begin{equation}\label{fser}
f(\bsx) \sim \sum_{\bsh \in \ZZ^s} \widehat{f}(\bsh) {\rm e}^{2 \pi \icomp \bsh \cdot \bsx}.
\end{equation}

Let $\alpha \in (0,1]$ and $t\in [1,\infty]$. Similarly to \cite{d14} we consider the norm 
\begin{equation*} 
\|f\|_{K_{\alpha,t}} = \sum_{\uu \subseteq [s] } |\uu|
\sum_{\boldsymbol{k}_{\uu} \in \mathbb{Z}_{\ast}^{|\uu|} } |
\widehat{f}(\boldsymbol{k}_{\uu}, \boldsymbol{0} )| + |f|_{H_{\alpha,
    t}}, 
\end{equation*}
where $\ZZ_{\ast}=\ZZ\setminus\{0\}$ and where
\begin{equation*}
|f|_{H_{\alpha, t}} = \sup_{\boldsymbol{x}, \boldsymbol{x} + \boldsymbol{h} \in [0,1]^s} \frac{|f(\boldsymbol{x} + \boldsymbol{h}) - f(\boldsymbol{x}) |}{\|\boldsymbol{h} \|_{\ell_t}^\alpha},
\end{equation*}
is the H\"older semi-norm where $\|\cdot\|_{\ell_t}$ denotes the norm in $\ell_t$. 

We define the following sub-class of the Wiener algebra $$K_{\alpha,t}:=\{f \in L_2([0,1]^s) \ : \ f \mbox{ is one-periodic and } \|f\|_{K_{\alpha,t}} < \infty\}.$$ The choice of $t$ will influence the dependence on the dimension of the worst-case error upper bound. As in \cite{d14} we remark that for any $f \in K_{\alpha,t}$ the Fourier series \eqref{fser} of $f$ converges to
$f$ at every point $\bsx \in [0,1]^s$. This follows directly from \cite[Corollary 1.8, p.\ 249]{stei}, using that $f$ is continuous since it satisfies a H\"older condition, i.e.
$|f|_{H_{\alpha, t}}< \infty$. More information on $K_{\alpha,t}$ can be found in \cite{d14}.

\begin{theorem}
Let $\cP_s$ be the point set from Definition~\ref{def1} with $k = 1$ and $q=p=N$ and let $Q_N$ be the QMC rule based on $\cP_s$. Then for $\alpha \in (0,1]$ and $t\in [1,\infty]$ we have $$e(Q_N,K_{\alpha,t}) \le \max\left( \frac{3}{\sqrt{N}}, \frac{s^{\alpha/t}}{N^\alpha} \right). $$ In particular, if $t=\infty$ we have $$e(Q_N,K_{\alpha,\infty}) \le \frac{3}{N^{\min(\alpha, 1/2)} }. $$
\end{theorem}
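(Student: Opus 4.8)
The plan is to expand the integration error into the Fourier series \eqref{fser} and to reduce everything to the character sums controlled by Lemma~\ref{le0}. Since every $f\in K_{\alpha,t}$ lies in the Wiener algebra (so $\sum_{\bsh}|\widehat f(\bsh)|<\infty$) and its Fourier series converges to $f$ pointwise, I may interchange the finite quadrature sum with the absolutely convergent Fourier expansion; using $I_s(f)=\widehat f(\bszero)$ this gives
\[ I_s(f)-Q_N(f)=-\sum_{\bsh\in\ZZ^s\setminus\{\bszero\}}\widehat f(\bsh)\,S_N(\bsh),\qquad S_N(\bsh):=\frac1N\sum_{n=0}^{N-1}{\rm e}^{2\pi\icomp\,\bsh\cdot\bsx_n}. \]
For the point set of Definition~\ref{def1} with $k=1$ and $q=p=N$ one has $x_{n,j}=z_{n,j}/p$ with $z_{n,j}\in\{0,\dots,p-1\}$, so, writing $\uu$ for the support of $\bsh$ and reducing the components of $\bsh$ modulo $p$, the sum $S_N(\bsh)$ equals $\frac1q\sum_{n=0}^{q-1}\psi(\bsw\cdot\bsz_{n,\uu})$, where $\bsw\in\F_q^{|\uu|}$ collects the residues $h_j\bmod p$ for $j\in\uu$. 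This is exactly the sum estimated in Lemma~\ref{le0}.

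I then split the outer sum according to whether $\bsw=\bszero$, that is, whether $p\mid h_j$ for all $j\in\uu$. For the terms with $\bsw\ne\bszero$ Lemma~\ref{le0} gives $|S_N(\bsh)|\le q^{-1}\big((2|\uu|-2)q^{1/2}+|\uu|+1\big)$, and since $(2m-2)q^{1/2}+m+1\le 3m\,q^{1/2}$ for all $m\ge1$ and $q\ge2$ (because $(m+2)q^{1/2}\ge m+1$), this simplifies to $|S_N(\bsh)|\le 3|\uu|/\sqrt N$. Grouping these terms by $\uu$ and enlarging each inner sum to all of $\ZZ_\ast^{|\uu|}$, their total contribution is at most
\[ \frac{3}{\sqrt N}\sum_{\emptyset\ne\uu\subseteq[s]}|\uu|\sum_{\bsk_\uu\in\ZZ_\ast^{|\uu|}}|\widehat f(\bsk_\uu,\bszero)|, \]
i.e. $3/\sqrt N$ times the Fourier part of $\|f\|_{K_{\alpha,t}}$.

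For the remaining terms, where $p\mid h_j$ for every $j$, one has $S_N(\bsh)=1$, so their contribution is $\sum_{\bsh\ne\bszero,\;p\mid h_j\,\forall j}\widehat f(\bsh)$. Here I use the discrete Poisson identity $\sum_{p\mid h_j\,\forall j}\widehat f(\bsh)=p^{-s}\sum_{\boldsymbol\ell\in\{0,\dots,p-1\}^s}f(\boldsymbol\ell/p)$ (again justified by pointwise convergence of \eqref{fser} together with absolute convergence), so that this contribution equals the full grid average of $f$ minus $\int_{[0,1]^s}f$. Writing the integral as the sum of the integrals over the cubes $\prod_{j}[\ell_j/p,(\ell_j+1)/p)$ and applying the H\"older estimate $|f(\boldsymbol\ell/p)-f(\bsx)|\le|f|_{H_{\alpha,t}}\|\bsx-\boldsymbol\ell/p\|_{\ell_t}^\alpha\le|f|_{H_{\alpha,t}}(s^{1/t}/p)^\alpha$ bounds it by $|f|_{H_{\alpha,t}}\,s^{\alpha/t}/N^\alpha$, using $N=p$.

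It remains to combine the two estimates. Denoting by $A$ the Fourier part and by $B=|f|_{H_{\alpha,t}}$ the H\"older part of the norm, so that $A+B=\|f\|_{K_{\alpha,t}}\le1$, the triangle inequality gives $|I_s(f)-Q_N(f)|\le\frac{3}{\sqrt N}A+\frac{s^{\alpha/t}}{N^\alpha}B\le\max\!\big(\frac{3}{\sqrt N},\frac{s^{\alpha/t}}{N^\alpha}\big)$, which is the claimed bound; the case $t=\infty$ follows since then $s^{\alpha/t}=1$ and comparing the exponents $1/2$ and $\alpha$ yields $3/N^{\min(\alpha,1/2)}$. I expect the main obstacle to be the aliasing terms with $p\mid h_j$ for all $j$: one must recognise the discrete Poisson summation identity that converts this high-frequency Fourier mass into a full grid average and then control the resulting grid-sum-minus-integral by the H\"older semi-norm, whereas the low-frequency part is a fairly direct application of Lemma~\ref{le0}.
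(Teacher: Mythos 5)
Your proof is correct and follows essentially the same route as the paper: the same Fourier-series expansion, the same split of the frequencies according to whether all components are divisible by $N=p$, the same application of Lemma~\ref{le0} (with the same simplification to $3|\uu|\sqrt{q}$) for the non-aliased part, and the same final combination $\tfrac{3}{\sqrt N}A+\tfrac{s^{\alpha/t}}{N^{\alpha}}B\le\max(\cdot)\|f\|_{K_{\alpha,t}}$. The only difference is that where the paper cites \cite[Lemma~1]{d14} for the aliasing terms with $N\mid\bsk$, you reprove that bound inline via discrete Poisson summation and the H\"older estimate over the grid cells, which is in substance the proof of the cited lemma.
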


\begin{proof}
For $f \in K_{\alpha,t}$ we have
\begin{eqnarray*}
\lefteqn{\left|\frac{1}{N} \sum_{n=0}^{N-1} f(\boldsymbol{x}_n) - \int_{[0,1]^s} f(\boldsymbol{x}) \,\mathrm{d} \boldsymbol{x} \right| = \left| \sum_{\bsk \in \mathbb{Z}^s \setminus \{\bszero\}} \widehat{f}(\bsk) \frac{1}{N} \sum_{n=0}^{N-1} {\rm e}^{2 \pi \icomp \bsk \cdot \bsx_n}\right|} \\  
& \le & \sum_{\bsk \in \mathbb{Z}^s \atop N \nmid \bsk} |\widehat{f}(\bsk)| \frac{1}{N} \left| \sum_{n=0}^{N-1} {\rm e}^{\frac{2 \pi \icomp}{p} \bsk \cdot \bsc_n}\right| +  \sum_{\boldsymbol{k} \in \mathbb{Z}^s \setminus \{\boldsymbol{0} \} \atop  N \mid \boldsymbol{k} } |\widehat{f}(\boldsymbol{k}) |\\
& = & \sum_{\emptyset \not=\uu \subseteq [s]} \sum_{\bsk_{\uu} \in \mathbb{Z}_{\ast}^{|\uu|} \atop N \nmid \bsk_{\uu}} |\widehat{f}((\bsk_{\uu},\bszero))| \frac{1}{N} \left| \sum_{n=0}^{N-1} {\rm e}^{\frac{2 \pi \icomp}{p} \bsk_{\uu} \cdot \bsc_{n,\uu}}\right| +  \sum_{\boldsymbol{k} \in \mathbb{Z}^s \setminus \{\boldsymbol{0} \}} |\widehat{f}(N \boldsymbol{k}) |,
\end{eqnarray*}
where $N\mid \bsk$ if all coordinates of $\bsk$ are divisible by $N$ and $N\nmid \bsk$ otherwise.
Now we apply Lemma~\ref{le0} to the first sum and \cite[Lemma~1]{d14} to the second sum and obtain
\begin{eqnarray*}
\lefteqn{\left|\frac{1}{N} \sum_{n=0}^{N-1} f(\boldsymbol{x}_n) - \int_{[0,1]^s} f(\boldsymbol{x}) \,\mathrm{d} \boldsymbol{x} \right|}\\
& \le & \frac{3}{\sqrt{N}} \sum_{\emptyset \not=\uu \subseteq [s]} |\uu| \sum_{\boldsymbol{k}_{\uu} \in \mathbb{Z}_{\ast}^{|\uu|} \atop N \nmid \boldsymbol{k}_{\uu} }  |\widehat{f}((\boldsymbol{k}_{\uu},\bszero))|  + \frac{s^{\alpha/t}}{N^{\alpha}} |f|_{H_{\alpha,t}}\\ 
& \le &  \max\left( \frac{3}{\sqrt{N}}, \frac{s^{\alpha/t}}{N^\alpha} \right) \|f\|_{K_{\alpha, t}}.
\end{eqnarray*}
The result follows. 
\end{proof}

\begin{corollary}
Integration in $K_{\alpha,\infty}$ is strongly polynomially tractable with $\varepsilon$-exponent at most $\max(\tfrac{1}{\alpha},2)$. 
\end{corollary}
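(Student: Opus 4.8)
The plan is to follow exactly the template used in the proof of Corollary~\ref{cor1}, because the preceding theorem already hands us a worst-case error bound that carries no dependence on the dimension whatsoever. First I would record that, for the point set of Definition~\ref{def1} with $k=1$ and $q=p=N$, the theorem gives
$$e(Q_N,K_{\alpha,\infty})\le \frac{3}{N^{\min(\alpha,1/2)}}.$$
The decisive feature is that the right-hand side is entirely free of $s$; this is precisely what will turn polynomial tractability into \emph{strong} polynomial tractability. Writing $\mu:=\min(\alpha,1/2)$, I would note that $1/\mu=\max(1/\alpha,2)$, which is exactly the exponent to be established.

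Next I would invert the bound. Given $\varepsilon\in(0,1)$, I want the least admissible $N$ with $3N^{-\mu}\le\varepsilon$, that is $N\ge(3\varepsilon^{-1})^{1/\mu}=(3\varepsilon^{-1})^{\max(1/\alpha,2)}$, so I set $M:=\lceil(3\varepsilon^{-1})^{\max(1/\alpha,2)}\rceil$. The one structural constraint is that Definition~\ref{def1} demands $q$ to be a prime power, and since $k=1$ it in fact demands $q=N$ to be a genuine prime. To respect this I would let $N$ be the smallest prime with $N\ge M$ and invoke Bertrand's postulate to obtain $N<2M$. For this $N$ the theorem yields $e(Q_N,K_{\alpha,\infty})\le 3N^{-\mu}\le 3M^{-\mu}\le\varepsilon$, whence
$$N(\varepsilon,s)\le N<2M\le 2\bigl((3\varepsilon^{-1})^{\max(1/\alpha,2)}+1\bigr).$$
This is a bound of the form $C\varepsilon^{-\tau}$ with $\tau=\max(1/\alpha,2)$ and a constant $C$ independent of $s$, which is exactly strong polynomial tractability with $\varepsilon$-exponent at most $\max(1/\alpha,2)$.

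I do not anticipate any genuinely hard step: all the substance resides in the preceding theorem, and the corollary is merely an inversion of its error estimate. The only points requiring care are bookkeeping ones. The first is the primality requirement $q=N$, which forces the appeal to Bertrand's postulate and costs only the harmless factor $2$ in the constant. The second is that Definition~\ref{def1} presupposes $1\le s\le q=N$, so one should read the argument as producing, for each pair $(\varepsilon,s)$, a valid prime $N$ of the stated size; the dimension-independence of the error bound is what guarantees the resulting complexity estimate stays uniform in $s$.
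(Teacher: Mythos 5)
Your argument is exactly the paper's: the paper proves this corollary by saying ``the proof is similar to the one of Corollary~\ref{cor1},'' and that proof is precisely the inversion-plus-Bertrand template you carry out, with $M:=\lceil(3\varepsilon^{-1})^{1/\mu}\rceil$, the smallest prime $N\ge M$, and $N<2M$. Your bookkeeping (including the observation that $1/\min(\alpha,1/2)=\max(1/\alpha,2)$ and the remark on the constraint $s\le q$) is correct and matches the intended proof.
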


\begin{proof}
The proof is similar to the one of Corollary~\ref{cor1}. 
\end{proof}

\paragraph{Absolutely convergent cosine series.} So far we required that the functions are periodic. Now we show how we can get rid of this assumption. 
Let us consider cosine series instead of classical Fourier series.

The cosine system $\{\cos(k \pi x) \ : \ k \in \NN_0\}$ forms a complete orthogonal basis of $L_2([0,1])$. To get an ONB we need to normalize it. Hence we define
$$\sigma_k(x):=\left\{
\begin{array}{ll}
1 & \mbox{ if } k=0,\\
\sqrt{2} \cos(k \pi x) & \mbox{ if } k \in \NN, 
\end{array}\right.$$
then the system $$\{\sigma_k \ : \ k \in \NN_0\}$$ forms an ONB of $L_2([0,1])$.
For $\bsk=(k_1,\ldots,k_s) \in \NN_0^s$ and $\bsx=(x_1,\ldots,x_s) \in [0,1]^s$ define $$\sigma_{\bsk}(\bsx) =\prod_{j=1}^s \sigma_{k_j}(x_j).$$ The $\sigma_{\bsk}$ with $\bsk \in \NN_0^s$ constitute an ONB of $L_2([0,1]^s)$.

To an $L_2$-function $g$ we can associate the cosine series $$g(\bsx) \sim \sum_{\bsk \in \NN_0^s} \widetilde{g}(\bsk) \sigma_{\bsk}(\bsx)$$ with cosine coefficients $\widetilde{g}(\bsk)=\int_{[0,1]^s} g(\bsx) \sigma_{\bsk}(\bsx) \rd \bsx$.

In order to apply the results for Fourier series to cosine series we need the tent transformation $\phi:[0,1] \rightarrow [0,1]$ given by $$\phi(x)=1-|2 x-1|.$$  For vectors $\bsx$ the tent transformed point $\phi(\bsx)$ is understood component wise. The tent transformation is a Lebesgue measure preserving map and we have $$\int_{[0,1]^s} g(\bsx) \rd \bsx = \int_{[0,1]^s} g(\phi(\bsx)) \rd \bsx.$$

Define the norm $$\|g\|_{C_{\alpha,t}}=\sum_{u \subseteq [s]} |\uu| 2^{|\uu|/2} \sum_{\bsk_{\uu} \in \NN^{|\uu|}} |\widetilde{g}((\bsk_{\uu},\bszero))|+ 2^{\alpha} |g|_{H_{\alpha,t}}$$ and let $$C_{\alpha,t}=\{g \in L_2([0,1]^s) \ : \ \|g\|_{C_{\alpha,t}} < \infty\}.$$ For $g \in L_2([0,1]^s)$ we have that the function $f=g\circ \phi$ is one-periodic and $$\|f\|_{K_{\alpha,t}}=\|g\|_{C_{\alpha,t}}.$$ The cosine series of $g$ converges point-wise and absolute to $g$ for all points in $[0,1]^s$.

Now we consider integration of functions from $C_{\alpha,t}$: For a point set $\cP_s=\{\bsx_0,\ldots,\bsx_{N-1}\}$ let $\cQ=\{\phi(\bsx_0),\ldots,\phi(\bsx_{N-1})\}$ be the tent transformed version of $\cP_s$. Let $Q_N$ be a QMC rule based on $\cP_s$. Then we denote by $Q_N^{\phi}$ the QMC rule based on $\cQ$.

As in \cite[Proof of Theorem~2]{d14} we have the identity of worst-case errors  in $K_{\alpha,t}$ and $C_{\alpha,t}$ when we switch from a QMC rule to the tent transformed version of this rule, namely $$e(Q_N,K_{\alpha,t})=e(Q_N^{\phi},C_{\alpha,t}).$$ With this identity we can transfer the results for periodic functions to not necessarily periodic ones.

\begin{corollary}
Let $\cP_s$ be the point set from Definition~\ref{def1} with $k = 1$ and $q=p=N$ and let $Q_N^{\phi}$ be the tent transformed version of the QMC rule based on $\cP_s$. Then for $\alpha \in (0,1]$ and $t\in [1,\infty]$ we have $$e(Q_N^\phi,C_{\alpha,t}) \le \max\left( \frac{3}{\sqrt{N}}, \frac{s^{\alpha/t}}{N^\alpha} \right). $$ In particular, if 
$t=\infty$ we have $$e(Q_N^\phi,C_{\alpha,\infty}) \le \frac{3}{N^{\min(\alpha, 1/2)} }. $$ 
\end{corollary}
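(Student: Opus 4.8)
The plan is to reduce the cosine-series corollary directly to the Fourier-series theorem via the tent-transformation machinery that has already been set up in the text. The key observation, stated just before the corollary, is the identity of worst-case errors $e(Q_N,K_{\alpha,t})=e(Q_N^{\phi},C_{\alpha,t})$, which holds whenever $Q_N^\phi$ is the tent-transformed version of $Q_N$. This identity is the workhorse: it says that integrating functions from $C_{\alpha,t}$ with the tent-transformed rule is, from the point of view of worst-case error, exactly the same problem as integrating functions from $K_{\alpha,t}$ with the original rule.

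First I would instantiate $\cP_s$ to be the point set from Definition~\ref{def1} with $k=1$ and $q=p=N$, exactly as in the corollary, and let $Q_N$ be the QMC rule based on $\cP_s$. By the preceding theorem (the Fourier-series result), this rule satisfies
\begin{equation*}
e(Q_N,K_{\alpha,t}) \le \max\left( \frac{3}{\sqrt{N}}, \frac{s^{\alpha/t}}{N^\alpha} \right).
\end{equation*}
Next I would invoke the worst-case-error identity to transport this bound: since $Q_N^\phi$ denotes the QMC rule based on the tent-transformed point set $\cQ=\{\phi(\bsx_0),\ldots,\phi(\bsx_{N-1})\}$, we have $e(Q_N^\phi,C_{\alpha,t})=e(Q_N,K_{\alpha,t})$, and therefore $e(Q_N^\phi,C_{\alpha,t})$ inherits the same upper bound. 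The specialization to $t=\infty$ follows because $s^{\alpha/\infty}=s^0=1$, so the maximum collapses to $\max(3/\sqrt N, 1/N^\alpha) = 3/N^{\min(\alpha,1/2)}$, using $3\ge 1$.

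There is essentially no obstacle here beyond bookkeeping, because the heavy lifting was done in two earlier places: the norm identity $\|f\|_{K_{\alpha,t}}=\|g\|_{C_{\alpha,t}}$ for $f=g\circ\phi$, which establishes the isometry between the two function classes under the tent map, and the measure-preserving property $\int_{[0,1]^s} g(\bsx)\rd\bsx=\int_{[0,1]^s} g(\phi(\bsx))\rd\bsx$, which guarantees that the exact integrals agree. Together these yield the worst-case-error identity, referenced to \cite[Proof of Theorem~2]{d14}. The only point requiring a moment of care is the direction of the transformation: one must apply the tent map to the \emph{nodes} (producing $Q_N^\phi$) so that evaluating $g$ at $\phi(\bsx_n)$ corresponds to evaluating the periodic lift $f=g\circ\phi$ at $\bsx_n$, which is precisely what makes the Fourier bound applicable. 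Hence the corollary follows immediately from the theorem together with the identity $e(Q_N,K_{\alpha,t})=e(Q_N^{\phi},C_{\alpha,t})$.
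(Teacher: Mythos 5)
Your proof is correct and follows exactly the route the paper intends: the corollary is an immediate consequence of the Fourier-series theorem combined with the identity $e(Q_N,K_{\alpha,t})=e(Q_N^{\phi},C_{\alpha,t})$ established just before the statement. The paper gives no separate proof for this corollary precisely because this reduction is the whole argument, and your handling of the $t=\infty$ specialization is also the intended one.
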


\begin{corollary}
Integration in $C_{\alpha,\infty}$ is strongly polynomially tractable with $\varepsilon$-exponent at most $\max(\tfrac{1}{\alpha},2)$. 
\end{corollary}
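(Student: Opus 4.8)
The plan is to follow the proof of Corollary~\ref{cor1} almost verbatim, starting from the worst-case error bound supplied by the immediately preceding corollary. That corollary already records the dimension-independent estimate $e(Q_N^{\phi},C_{\alpha,\infty}) \le 3\,N^{-\min(\alpha,1/2)}$ for the tent-transformed point set of Definition~\ref{def1} with $k=1$ and $q=p=N$. Conceptually this bound agrees with the one obtained for $K_{\alpha,\infty}$, because the tent transformation is measure preserving and satisfies $\|g\circ\phi\|_{K_{\alpha,t}}=\|g\|_{C_{\alpha,t}}$; this is exactly the content of the identity $e(Q_N,K_{\alpha,t})=e(Q_N^{\phi},C_{\alpha,t})$. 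Hence it suffices to invert this error bound in $N$ and to exhibit a concrete prime $N$ of the right size.

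Fix $\varepsilon\in(0,1)$ and set $M:=\lceil (3\varepsilon^{-1})^{\max(1/\alpha,2)}\rceil$. First I would choose $N=p$ to be the smallest prime with $p\ge M$; by the Postulate of Bertrand we have $p<2M$, and with this choice the construction of Definition~\ref{def1} is available for every dimension $s\le p$. The one arithmetic point to check is the elementary identity $\max(1/\alpha,2)\cdot\min(\alpha,1/2)=1$, verified by splitting into the cases $\alpha\le 1/2$ and $\alpha>1/2$. It yields $M^{\min(\alpha,1/2)}\ge 3\varepsilon^{-1}$ and therefore $3\,p^{-\min(\alpha,1/2)}\le 3\,M^{-\min(\alpha,1/2)}\le\varepsilon$, so that the tent-transformed QMC rule $Q_p^{\phi}$ already attains worst-case error at most $\varepsilon$.

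Putting these together, the information complexity obeys $N(\varepsilon,s)\le p<2M=2\lceil(3\varepsilon^{-1})^{\max(1/\alpha,2)}\rceil$, a bound that does not involve $s$; this is precisely strong polynomial tractability with $\varepsilon$-exponent at most $\max(1/\alpha,2)$. The step needing the most care is, exactly as in Corollary~\ref{cor1}, the role of the dimension: Definition~\ref{def1} forces $s\le q=p=N$, so one must observe that the prime $p$ is selected solely to push the error below $\varepsilon$ while the error estimate itself is uniform in $s$; consequently no dependence on $s$ enters the final bound on $N(\varepsilon,s)$, and the argument is genuinely identical to that for $K_{\alpha,\infty}$.
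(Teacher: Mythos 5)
Your proposal is correct and takes essentially the same route as the paper: the paper gives no explicit proof of this corollary, implicitly deferring to the Bertrand-postulate inversion of the error bound used for Corollary~\ref{cor1}, which is exactly what you reconstruct (including the key identity $\max(1/\alpha,2)\cdot\min(\alpha,1/2)=1$ and the passage through $e(Q_N^{\phi},C_{\alpha,\infty})\le 3N^{-\min(\alpha,1/2)}$).
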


\noindent\textbf{Authors' addresses:}\\

\noindent Josef Dick, School of Mathematics and Statistics, The University of New South Wales, Sydney NSW 2052, Australia. Email: \texttt{josef.dick@unsw.edu.au}\\

\noindent Domingo Gomez-Perez, Faculty of Sciences, University of Cantabria, E-39071 Santander, Spain. Email: \texttt{domingo.gomez@unican.es}\\

\noindent Friedrich Pillichshammer, Department for Financial Mathematics and Applied Number Theory, Johannes Kepler University Linz, Altenbergerstr.\ 69, 4040 Linz, Austria. Email: \texttt{friedrich.pillichshammer@jku.at}\\ 

\noindent Arne Winterhof, Johann Radon Institute for Computational and Applied Mathematics, Austrian Academy of Sciences, Altenbergerstr.\ 69, 4040 Linz, Austria. Email: \texttt{arne.winterhof@oeaw.ac.at}


\begin{thebibliography}{99}
\bibitem{Aist} Ch. Aistleitner, Covering numbers, dyadic chaining and discrepancy. J. Complexity 27 (2011), 531--540.


\bibitem{AH14} Ch. Aistleitner and M. Hofer, Probabilistic discrepancy bound for Monte Carlo point sets. Math. Comp. 83 (2014), 1373--1381. 

\bibitem{chen2008}
Z. Chen, \newblock Finite binary sequences constructed by explicit inversive methods.
Finite Fields Appl. 14 (2008), 579--592.

\bibitem{d14} J. Dick, Numerical integration of H\"older continuous, absolutely convergent Fourier, Fourier cosine, and Walsh series. J. Approx. Theory 183 (2014), 14--30.

\bibitem{DKS_acta} J. Dick, F. Y. Kuo and I. H. Sloan, High dimensional numerical integration - the Quasi-Monte Carlo way. Acta Numer. 22 (2013), 133--288.

\bibitem{DP10} J. Dick and F. Pillichshammer, {\it Digital nets and sequences. Discrepancy theory and quasi-Monte Carlo integration.} Cambridge University Press, Cambridge, 2010. 

\bibitem{dipiSETA} J. Dick and F. Pillichshammer, The inverse of the star-discrepancy problem and the generation of pseudo-random numbers.  In: {\it Sequences and their applications-SETA 2014} (K.-U. Schmidt and A. Winterhof, eds.), Lecture Notes in Comput. Sci., Vol. 8865, pp. 173--184, Springer, Heidelberg, 2014.

\bibitem{dipi14} J. Dick and F. Pillichshammer, The weighted star discrepancy of Korobov's $p$-sets. Proc. Amer. Math. Soc. 143 (2015), 5043--5057.


\bibitem{HNUWI} A. Hinrichs, E. Novak, M. Ullrich and H. Wo\'zniakowski.
The curse of dimensionality for numerical integration of smooth functions. Math. Comp. 83 (2014), 2853--2863.

\bibitem{HNUWII} A. Hinrichs, E. Novak, M. Ullrich and H. Wo\'zniakowski.
The curse of dimensionality for numerical integration of smooth functions II.
J. Complexity 30 (2014), 117--143.

\bibitem{HNW11}  A. Hinrichs, E. Novak and H. Wo\'zniakowski, The curse of dimensionality for the class of monotone functions and for the class of convex functions. J. Approx. Theory 163 (2011), 955--965. 


\bibitem{HNWW} S. Heinrich, E. Novak, G.W. Wasilkowski and H. Wo\'{z}niakowski, The inverse of the star-discrepancy depends linearly on the dimension. Acta Arith. 96 (2001), 279--302.

\bibitem{IT}  T. Itoh and S. Tsujii, A fast algorithm for computing multiplicative inverses in $GF(2^m)$ using normal bases. Inform. and Comput. 78 (1988), 171--177.

\bibitem{J} D. Jungnickel, Finite fields. Structure and arithmetics. Bibliographisches Institut, Mannheim, 1993.


\bibitem{LP14} G. Leobacher and F. Pillichshammer, {\it Introduction to quasi-Monte-Carlo integration and applications.} Compact Textbooks in Mathematics,
Birkh\"auser, 2014.

 \bibitem{lini94} R. Lidl and H. Niederreiter, {\em Introduction to Finite Fields
	and Their Applications}, revised ed., Cambridge 
	University Press, Cambridge, 1994. 


\bibitem{momo} C. J. Moreno and O. Moreno, Exponential sums and Goppa codes: I. Proc. Amer.
Math. Soc. 111 (1991), 523--531.
	
 \bibitem{ni92} H. Niederreiter, {\em Random number generation and quasi-Monte Carlo methods}.
CBMS-NSF Regional Conference Series in Applied Mathematics, 63. Society for Industrial and Applied Mathematics (SIAM), Philadelphia, PA, 1992.


 \bibitem{niwi00} H. Niederreiter and A. Winterhof, 
Incomplete exponential sums over finite fields and their applications to new inversive pseudorandom number generators. Acta Arith. 93 (2000), 387--399. 

\bibitem{NW08} E. Novak and H. Wo\'zniakowski, \textit{Tractability of Multivariate Problems, Volume I: Linear Information}. EMS, Zurich, 2008.

\bibitem{NW10} E. Novak and H. Wo\'zniakowski, \textit{Tractability of
Multivariate Problems, Volume II: Standard Information for
Functionals}. EMS, Zurich, 2010.

\bibitem{NW12} E. Novak and H. Wo\'zniakowski, \textit{Tractability of
Multivariate Problems, Volume III: Standard Information for
Operators}. EMS, Zurich, 2012.

\bibitem{meidl2004linear}
W. Meidl and A. Winterhof,
\newblock On the linear complexity profile of some new explicit inversive pseudorandom numbers.
J. Complexity 20 (2004), 350--355.

\bibitem{SW} I. H. Sloan and H. Wo\'{z}niakowski, When are quasi-Monte Carlo algorithms efficient for high-dimensional integrals? J. Complexity 14 (1998), 1--33.

\bibitem{stei} E. M. Stein and G. Weiss, {\it Introduction to Fourier analysis on Euclidean spaces.} Princeton Mathematical Series, No. 32. Princeton University Press, Princeton, N.J., 1971.

\bibitem{wi06} A. Winterhof, On the distribution of some new explicit inversive pseudorandom numbers and vectors. In: {\it Monte Carlo and Quasi-Monte Carlo Methods 2004} (H. Niederreiter and D. Talay, eds.), 487--499, Springer, Berlin, 2006.
\end{thebibliography}
\end{document}